\newtheorem{theorem}{Theorem}[section]
\newtheorem{proposition}[theorem]{Proposition}
\newtheorem{lemma}[theorem]{Lemma}
\newtheorem{corollary}[theorem]{Corollary}
\newtheorem{definition}[theorem]{Definition}
\newtheorem{remark}[theorem]{Remark}
\newtheorem{example}[theorem]{Example}
\def\N{{\mathbb N}}
\def\Z{{\mathbb Z}}
\def\st{\hbox{s.t.}}
\def\lcm{{\mathrm{lcm}}}
\def\deg{{\mathrm{deg}}}
\def\hN{{\hat{\N}}}
\def\bX{{\bar{X}}}
\def\bF{{\bar{F}}}
\def\bP{{\bar{P}}}
\def\biota{{\bar{\iota}}}
\def\bV{{\bar{V}}}
\def\bL{{\bar{L}}}
\def\w{{\mathrm{w}}}
\def\Im{{\mathrm{Im\,}}}
\def\End{{\mathrm{End}}}
\def\Mon{{\mathrm{Mon}}}
\def\Sat{{\mathrm{Sat}}}
\def\Gr{Gr\"obner}
\def\lm{{\mathrm{lm}}}
\def\lc{{\mathrm{lc}}}
\def\lt{{\mathrm{lt}}}
\def\LM{{\mathrm{LM}}}
\def\spoly{{\mathrm{spoly}}}
\def\Reduce{{\textsc{Reduce}}}
\def\HFreeGBasis{{\textsc{HFreeGBasis}}}
\def\FreeGBasis{{\textsc{FreeGBasis}}}
\def\SigmaGBasis{{\textsc{SigmaGBasis}}}
\begin{document}

\title[Extended letterplace correspondence]
{Extended letterplace correspondence for nongraded noncommutative ideals
and related algorithms}

\author[R. La Scala]{Roberto La Scala$^*$}

\address{$^*$ Dipartimento di Matematica, Universit\`a di Bari, Via Orabona 4,
70125 Bari, Italy}
\email{roberto.lascala@uniba.it}

\thanks{Partially supported by Universit\`a di Bari}

\subjclass[2000] {Primary 16Z05. Secondary 13P10, 68W30}

\keywords{Noncommutative algebras, Letterplace correspondence, Gr\"obner bases}

\maketitle

\begin{abstract}
Let $K\langle x_i\rangle$ be the free associative algebra generated
by a finite or a countable number of variables $x_i$. The notion of
``letterplace correspondence'' introduced in \cite{LSL1,LSL2} for
the graded (two-sided) ideals of $K \langle x_i \rangle$ is extended
in this paper also to the nongraded case. This amounts to the possibility
of modelizing nongraded noncommutative presented algebras by means
of a class of graded commutative algebras that are invariant under
the action of the monoid $\N$ of natural numbers. For such purpose
we develop the notion of saturation for the graded ideals of
$K\langle x_i,t\rangle$, where $t$ is an extra variable and for their
letterplace analogues in the commutative polynomial algebra $K[x_{ij},t_j]$,
where $j$ ranges in $\N$. In particular, one obtains an alternative algorithm
for computing inhomogeneous noncommutative \Gr\ bases using just homogeneous
commutative polynomials. The feasibility of the proposed methods is shown
by an experimental implementation developed in the computer algebra system
Maple and by using standard routines for the Buchberger algorithm contained
in Singular.
\end{abstract}



\section{Introduction}

Many structures and models in mathematics and physics are based on
noncommutative associative algebras that are given by a presentation
with a finite or a countable number of generators. It is sufficient
to mention the role of Hecke algebras or Temperley-Lieb ones
in statistical mechanics and noncommutative geometry \cite{Cn,KL},
as well as the relevance of more classical enveloping algebras \cite{Dx}
or relatively free algebras defined for PI-algebras \cite{Dr,GZ}.
A systematic way to control the consequences of the defining relations
of a presented algebra consists in considering a well-ordering on the
monomials of the free associative algebra (tensor algebra) which
is compatible with multiplication and in computing what is modernly
called a ``\Gr\ basis'' or a ``\Gr-Shirshov basis''. In fact,
if it is possible to describe such a basis for the two-sided ideal
of the relations satisfied by the generators of the associative algebra
then a monomial linear basis is given for it, that is, one has some
kind of generalization of the Poincar\'e-Birkhoff-Witt theorem.

Among the founding contributions to the theory of \Gr\ bases
for associative algebras one has to mention of course Bruno Buchberger
\cite{Bu} for the commutative case and the fundamental papers
\cite{Be, Gr, Mo, Uf1, Uf2} for the non-commutative one. For nonassociative
algebras one finds the roots of this algorithmic theory in the pioneeristic
work of Anatolii Shirshov \cite{S1,S2}. To explore some history and
the wide range of applications of the modern theory of noncommutative
\Gr\ bases one can see, for instance, \cite{BC,BCS,BK}.

Starting with the papers \cite{LSL1,LSL2}, through a substantial
development of the concept of letterplace embedding contained in \cite{DRS},
a new approach for the theory and computation of noncommutative \Gr\ bases
has been proposed. The basic idea is to define a bijective
correspondence between all graded two-sided ideals of the free associative
algebra and a class of multigraded invariant ideals of a commutative polynomial
algebra in double-indexed (letter-place) variables where shift operators
act over the place indices. Such bijection provides also a correspondence
between the homogeneous \Gr\ bases of these ideals. It follows that
the notion of \Gr\ basis in the commutative and noncommutative case
and the related algorithms can be considered as special instances
of a general theory of \Gr\ bases for commutative ideals that are
invariant under the action of suitable algebra endomorphisms
\cite{BD,GLS,LSL2,LS}. Since the endomorphisms acting on the
letterplace algebra are just shift operators, note that these results
contribute also to the theory of algebras of finite difference
polynomials \cite{Co,Le}.

The goal of the present paper is to complete the work initiated in 
\cite{LSL1,LSL2} by proposing an extension of the letterplace correspondence
to the nongraded case. This is obtained by analyzing in detail the concept
of saturation for nongraded ideals of the free associative algebra
and for their letterplace analogues. Note that the homogenization and
saturation processes for the noncommutative case were previously introduced
in \cite{LiSu,No,Uf3} (see also \cite{Li}). From the extended letterplace
correspondence one obtains an alternative algorithm to compute inhomogeneous
noncommutative \Gr\ bases by using homogeneous polynomials in commutative
variables.
In fact, these methods can be easily implemented in any commutative
computer algebra system. Then, one has that the theory and methods
for commutative and noncommutative \Gr\ bases are unified whenever
they are homogeneous or not. The feasibily of the proposed algorithms
is shown in practice by means of an experimental implementation and
a test set consisting of relevant classes of noncommutative algebras. 

In Section 2 we describe the bijective correspondence between all
(two-sided) ideals of the free associative algebra $F = K \langle X \rangle$
and the class of saturated graded ideals of the algebra $\bF =
K \langle \bX \rangle$, where $\bX = X\cup\{t\}$. If $\N = \{n\in\Z\mid
n\geq 0\}$ and $\N^* = \N\setminus\{0\}$, for the letterplace
algebras $P = K[X\times\N^*]$ and $\bP = K[\bX\times\N^*]$ we introduce
the action of the monoid $(\N,+)$ on the place indices of the variables
and also a multigrading based on such indices. Then, one obtains a bijection
between all $\N$-invariant ideals of $P$ and the class of saturated
multigraded $\N$-ideals of $\bP$. In Section 3 we review some key results
proved in \cite{LSL1,LSL2}. Precisely, the {\em letterplace ideals} of $P$
are defined as $\N$-ideals generated by elements that are multilinear
with respect to the place multigrading. Then, we introduce the {\em letterplace
correspondence} as a bijection between all graded ideals of $F$ and
the class of letterplace ideals of $P$. Note that under this correspondence
a saturated ideal of $\bF$ does not map into a saturated ideal of $\bP$.
It is necessary therefore to introduce the notion of {\em $L$-saturation}
for letterplace ideals as a saturation property that involves only multilinear
elements. By composing the above ideal correspondences, we finally obtain
the {\em extended letterplace correspondence} which maps all ideals of $F$
into the class of $L$-saturated letterplace ideals of $\bP$.

To develop effective methods for the $L$-saturation, in Section 4
we review the notion of monomial $\N$-ordering of $P$ and the constructin
of an important class of such orderings that we call {\em place $\N$-orderings}.
Then, we prove that they induce the graded right lexicographic ordering
of the free associative algebra $F$. We review finally the theory
of \Gr\ $\N$-bases for ideals of $P$ that are invariant under shift operators
and the related {\em letterplace algorithm} that computes homogeneous
noncommutative \Gr\ bases by using just elements of the commutative
algebra $P$. In Section 5 we solve the problem of computing $L$-saturations
of letterplace ideals by using {\em \Gr\ $L$-bases} that are \Gr\ $\N$-bases
restricted to multilinear elements. The monomial orderings of $\bP$
suitable for this task are place $\N$-orderings which are of elimination
for the extra variables $t(j)$. As a byproduct one obtains finally
a letterplace algorithm for computing inhomogeneous noncommutative
\Gr\ bases using homogeneous polynomials of $\bP$. This method is
illustrated in a detailed simple example in Section 6 and it is experimented
in Section 7 for classes of presented associative algebras that are
of interest in different areas of algebra. The experiments
are performed by means of an implementation developed in the language
of Maple and also by using standard routines for the Buchberger algorithm
that are implemented in \textsf{Singular} \cite{DGPS}. Conclusions
about the letterplace approach to noncommutative computations
and further developments of it are finally discussed in Section 8.


\section{Homogenized and saturated ideals}

We start studying the notion of homogenization and saturation for ideals
of the free associative algebra. These concepts have been introduced
essentially in \cite{LiSu,No,Uf3} but we intend to clarify why commutators
naturally arise in such constructions.
Denote by $F = K\langle X \rangle$ the free associative algebra
freely generated by a finite or a countable set $X = \{x_1,x_2,\ldots\}$.
Clearly, one has the algebra grading $F = \bigoplus_{d\in\N} F_d$ where
$F_d$ is the subspace of homogeneous polynomials of total degree $d$.
Let $t$ be a new variable disjoint by $X$. Define $\bX = X\cup\{t\},
\bF = K\langle \bX \rangle$. Consider the algebra endomorphism
$\varphi:\bF\to\bF$ such that $x_i\mapsto x_i$ and $t\mapsto 1$
for all $i\geq 1$. Clearly $\varphi^2 = \varphi$ and $F = \varphi(\bF)$.
Then, the map $\varphi$ defines a bijective correspondence between all
two-sided ideals of $F$ and two-sided ideals of $\bF$ containing
$\ker\varphi = \langle t - 1 \rangle$. In what follows, all the ideals
of the algebras $F,\bF$ are {\em assumed} two-sided ones.

\begin{definition}
Denote by $C$ the largest graded ideal contained in $\ker\varphi$, that is,
the ideal $C$ is generated by all homogeneous elements $f\in\bF$ such that
$\varphi(f) = 0$.
\end{definition}

\begin{proposition}
The ideal $C\subset\bF$ is generated by the commutators $[x_i,t] =
x_i t - t x_i$, for any $i\geq 1$.
\end{proposition}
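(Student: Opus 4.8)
The plan is to show $C = \langle [x_i,t] : i \geq 1 \rangle$ by proving two inclusions. Write $C' = \langle [x_i,t] : i \geq 1 \rangle$. The inclusion $C' \subseteq C$ is easy: each commutator $[x_i,t]$ is homogeneous of degree $2$, and $\varphi([x_i,t]) = x_i \cdot 1 - 1 \cdot x_i = 0$, so each generator of $C'$ lies in $\ker\varphi$ and is homogeneous, hence lies in the largest graded ideal $C$ contained in $\ker\varphi$.

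**The hard direction $C \subseteq C'$:**

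For the reverse inclusion, it suffices to show that every homogeneous $f \in \bF$ with $\varphi(f) = 0$ belongs to $C'$. The key observation is that modulo $C'$, the variable $t$ commutes with every $x_i$, hence $t$ is central in $\bF/C'$. Therefore in $\bF/C'$ every monomial in $\bX$ can be rewritten (by sliding all occurrences of $t$ to the right) in the form $w \cdot t^k$ where $w$ is a monomial in $X$ alone and $k \geq 0$. So, working modulo $C'$, any $f \in \bF$ can be written as $f \equiv \sum_{k \geq 0} g_k t^k \pmod{C'}$ with each $g_k \in F = K\langle X\rangle$, and this rewriting preserves total degree: a monomial $w t^k$ contributes $\deg(w) + k$. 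Hence if $f$ is homogeneous of degree $d$, then $\deg(g_k) = d - k$ for each $k$ appearing.

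**Concluding via the grading:**

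Now apply $\varphi$: since $\varphi(C') \subseteq \varphi(C) = 0$, we get $0 = \varphi(f) = \sum_{k\geq 0} g_k$ (as $\varphi$ fixes each $x_i$ and kills $t$, and $\varphi(g_k) = g_k$ because $g_k \in F$). But the $g_k$ are homogeneous of \emph{distinct} degrees $d-k$, so the relation $\sum_k g_k = 0$ in the graded algebra $F$ forces $g_k = 0$ for every $k$. Therefore $f \equiv 0 \pmod{C'}$, i.e. $f \in C'$, which completes the proof.

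**Where the work lies:**

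The main point requiring care is the rewriting step: one must verify that in $\bF/C'$ every element genuinely reduces to a $K$-linear combination of terms $w t^k$ with $w \in \langle X \rangle$ a word not involving $t$. This follows because the relations $x_i t = t x_i$ (equivalently $t x_i = x_i t$) let one move any $t$ past any $x_i$; applying this repeatedly to each monomial of $f$ collects all $t$'s together, and one must check the process terminates and respects the grading — both are immediate since each application of $x_i t \to t x_i$ strictly decreases, say, the number of inversions (pairs where an $x_i$ precedes a $t$), and preserves the multiset of letters hence the total degree. No deeper obstacle arises; the centrality of $t$ modulo $C'$ together with the grading of $F$ does all the work.
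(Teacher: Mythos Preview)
Your proof is correct and follows essentially the same approach as the paper's: both use that modulo the commutators $t$ becomes central, rewrite a homogeneous $f$ in a normal form separating the $t$-part from a sum of homogeneous components in $F$, and then use the grading of $F$ to conclude from $\varphi(f)=0$ that all components vanish. The paper writes the normal form as $t^{d'}\sum_k f_k t^{d-k}$ while you write it as $\sum_k g_k t^k$, but this is only a cosmetic difference; your version is arguably cleaner and you are more explicit about why the rewriting terminates.
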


\begin{proof}
Let $f\in\bF$ be a homogeneous element such that $\varphi(f) = 0$.
Since the commutators $[x_i,t]$ clearly belongs to $C$, we have
to prove that $f$ is congruent to $0$ modulo them. In fact, it is clear
that $f$ is congruent to a homogeneous element $f' = t^{d'} \sum_k f_k t^{d-k}$
where $d'\geq 0$ and $f_k\in F$ is homogeneous of degree $k$, for any $k$.
Then $0 = \varphi(f) = \varphi(f') = \sum_k f_k$ and hence $f_k = 0$
for all $k$. We conclude that $f' = 0$.
\end{proof}

We want now to define a bijective correspondence between all ideals of $F$
and some class of graded ideals of $\bF$ containing $C$.

\begin{definition}
Let $I$ be any ideal of $F$. We define $I^*\subset\bF$ to be the largest
graded ideal contained in the preimage $\varphi^{-1}(I)$, that is,
the ideal $I^*$ is generated by all homogeneous elements
in $\varphi^{-1}(I)$. Clearly $C = 0^*\subset I^*$. We call $I^*$
the {\em homogenization} of the ideal $I$.
\end{definition}

\begin{definition}
Let $f\in F, f\neq 0$ and denote by $f = \sum_k f_k$ the decomposition
of $f$ in its homogeneous components. We denote $\deg(f) =
d = \max\{k\}$ and define $f^* = \sum_k f_k t^{d-k}$. We call $\deg(f)$
the {\em top degree} of $f$ and $f^*$ its {\em homogenization}.
Clearly $f^*\in\bF$ is a homogeneous element such that $\deg(f^*) = 
\deg(f)$ and $\varphi(f^*) = f$.
\end{definition}

\begin{theorem}
Let $I$ be an ideal of $F$. Then $I^* = \langle f^*\mid f\in I,
f\neq 0\rangle + C$.
\end{theorem}

\begin{proof}
Denote $J = \langle f^*\mid f\in I,f\neq 0\rangle + C$. 
Clearly $J$ is a graded ideal of $\bF$ such that $\varphi(J)\subset I$ and
hence $J\subset I^*$. Let $g\in I^*$ be a homogeneous element and define
$f = \varphi(g)\in I$. If $f = 0$ then $g\in C\subset J$. Otherwise,
denote $d = \deg(f)$ and $d' = \deg(g)$. Since clearly $d'\geq d$ one has
that $g$ is congruent modulo $C$ to the element $t^{d'-d} f^*$ and
hence $g\in J$.
\end{proof}

If $I\subset F$ is an ideal one has clearly that $\varphi(I^*) = I$.
Moreover, if $J\subset\bF$ is a graded ideal containing $C$
then in general $J\subset \varphi(J)^*$.

\begin{definition}
Let $J\subset\bF$ be a graded ideal which contains $C$. Define
$\Sat(J) = \varphi(J)^* =
\langle \varphi(f)^* \mid f\in J,f\notin C,f\ \mbox{homogeneous} \rangle + C$.
Then $\Sat(J)\subset\bF$ is a graded ideal containing $J$ that we call
the {\em saturation} of $J$.
\end{definition}

\begin{definition}
Let $J\subset\bF$ be a graded ideal containing $C$.
We say that $J$ is {\em saturated} if $J$ coincides with its saturation
$\Sat(J)$, that is, for any homogeneous element $f\in J,f\notin C$
one has that $\varphi(f)^*\in J$. If $I$ is an ideal of $F$ then its
homogenization $I^*$ is clearly a saturated ideal.
\end{definition}

Note that in \cite{LiSu} an equivalent definition of saturated ideal
is named {\em dh-closed}. Then, a bijective correspondence is given
between all ideals of $F$ and the saturated graded ideals of $\bF$
containing $C$. One can characterize such ideals in the following way.

\begin{theorem}
\label{satchar}
Let $J\subset\bF$ be a graded ideal containing $C$. Then $J$ is saturated
if and only if $t f\in J$ with $f\in\bF$ implies that $f\in J$.
\end{theorem}

\begin{proof}
Suppose that $J$ is saturated and let $t g\in J$ with $g\in\bF$. Since $J$
is graded, we can assume that $g$ is homogeneous. Put $f = \varphi(g) =
\varphi(t g)$. If $f = 0$ then $g\in C\subset J$. Otherwise, since $J$
is saturated and $t g\in J$ we obtain that $f^*\in J$. Moreover, one has
clearly that $g$ is congruent modulo $C$ to an element $t^d f^*\in J$
for some $d\geq 0$ and hence $g\in J$.
Suppose now that $t g\in J$ implies $g\in J$ and let $g\in J,g\notin C$
be a homogeneous element. If $f = \varphi(g)$ then $g$ is congruent
modulo $C\subset J$ to an element $t^d f^*$. We conclude that $t^d f^*\in J$
and therefore $f^*\in J$.
\end{proof}

\begin{corollary}
Let $J\subset\bF$ be a graded ideal containing $C$. Then, we have that
$\Sat(J) = \{f\in\bF\mid t^i f\in J,\ \mbox{for some}\ i\geq 0\}$.
\end{corollary}

\begin{proof}
Denote $J' = \{f\mid t^i f\in J,\ \mbox{for some}\ i\}$.
Let $g\in\bF$ and $f\in J'$, that is, $t^i f\in J$ for some $i$.
Clearly $g t^i f\in J$ and also $t^i g f\in J$ since $C\subset J$.
We conclude that $g f\in J'$. With similar arguments one proves
that $J'$ is a graded ideal of $\bF$ containing $J$.
Moreover, by Theorem \ref{satchar} it follows immediately that
$J'$ is a saturated ideal. Finally, we have clearly that $\varphi(J') =
\varphi(J)$ and hence $J' = \varphi(J')^* = \varphi(J)^* = \Sat(J)$.
\end{proof}


We start now considering commutative polynomial algebras
with the purpose of defining analogues of the above
noncommutative constructions. Denote $\N^* = \N\setminus\{0\}$
and consider the product set $X(\N^*) = X\times\N^*$. For the
elements of this set we make use of the notation $x_i(j) = (x_i,j)$,
for all $i,j\geq 1$. Define $P = K[X(\N^*)]$ the polynomial algebra
in all commuting variables $x_i(j)$. The algebra $P$ is called
the {\em letterplace algebra} \cite{DRS}. Denote by $\End(P)$
the monoid of all algebra endomorphisms of $P$. A monoid homomorphism
$\rho:\N\to\End(P)$ is defined by putting $\rho(k)(x_i(j)) = x_i(k+j)$,
for all $k\geq 0$ and for any $i,j\geq 1$. We say therefore that
$P$ is an {\em $\N$-algebra}. In fact, $P$ is a free $\N$-algebra
generated by the set $X(1) = \{x_i(1)\mid i\geq 1\}$ (see \cite{LS,GLS}).
We make use of the notation $k\cdot f = \rho(k)(f)$, for all $k\geq 0$
and for any $f\in P$. Note finally that $\rho$ together with all $\rho(k)$
are injective maps.
An ideal $I\subset P$ is called an {\em $\N$-invariant ideal} or
an {\em $\N$-ideal} if $\N\cdot I\subset I$.
Clearly, we have the algebra grading $P = \bigoplus_{d\in\N} P_d$ where
$P_d$ is the subspace of homogeneous polynomials of total degree $d$.
The algebra $P$ has another natural multigrading defined as follows.
If $m = x_{i_1}(j_1)\cdots x_{i_d}(j_d)\in\Mon(P)$ then we denote
$\partial(m) = \mu = (\mu_k)_{k\in\N^*}$ where $\mu_k =
\#\{\alpha \mid j_\alpha = k\}$. If $P_\mu\subset P$ is the subspace
spanned by all monomials of multidegree $\mu$ then $P = \bigoplus_\mu P_\mu$
is clearly a multigrading. Note that the multidegrees $\mu = (\mu_k)_{k\in\N^*}$
have finite support and one can define $|\mu| = \sum_k \mu_k$. 
Then, one has clearly that $P_d = \bigoplus_{|\mu|=d} P_\mu$, that is,
the multihomogeneous elements are also homogeneous ones. Note that
the multigrading is compatible with the $\N$-algebra structure on $P$.
Precisely, if $\mu = (\mu_k)$ is a multidegree then we denote
$i\cdot\mu = (\mu_{k-i})_{k\in\N^*}$ where we put $\mu_{k-i} = 0$
when $k-i < 1$. Then, for all $i\geq 0$ and for any multidegree $\mu$
one has that $i\cdot P_\mu\subset P_{i\cdot\mu}$.

Define $\bP = K[\bX(\N^*)]$ and consider the $\N$-algebra endomorphism
$\psi:\bP\to\bP$ such that $x_i(1)\mapsto x_i(1)$ and $t(1)\mapsto 1$
for all $i\geq 1$. Clearly, the map $\psi$ is idempotent and
$P = \psi(\bP)$. Moreover, one has that the $\N$-ideal $\ker(\psi) =
\langle t(1) - 1 \rangle_\N$ does not contain any multihomogeneous
element different from zero. We define now a bijective correspondence
between all $\N$-ideals of $P$ and some class of multigraded $\N$-ideals
of $\bP$.

\begin{definition}
Let $I$ be any $\N$-ideal of $P$. We define $I^*\subset\bP$ to be the largest
multigraded $\N$-ideal contained in the preimage $\psi^{-1}(I)$, that is,
the ideal $I^*$ is generated by all multihomogeneous elements in $\psi^{-1}(I)$.
We call $I^*$ the {\em multihomogenization} of the ideal $I$.
Note that $0^* = 0$.
\end{definition}

\begin{definition}
Let $f\in P, f\neq 0$ and denote by $f = \sum_\mu f_\mu$ the decomposition
of $f$ in its multihomogeneous components. We denote $\partial(f) =
\nu = (\max_\mu\{\mu_k\})_{k\in\N^*}$ and define $f^* = \sum_\mu f_\mu
\prod_k t(k)^{\nu_k-\mu_k}$. We call $\partial(f)$ the {\em top multidegree}
of $f$ and $f^*$ its {\em multihomogenization}. Clearly $f^*\in\bP$ is a
multihomogeneous element such that $\partial(f^*) = \partial(f)$
and $\psi(f^*) = f$. Moreover, one has that $(i\cdot f)^* =
i\cdot f^*$, for all $i\geq 0$.
\end{definition}

\begin{theorem}
Let $I$ be an $\N$-ideal of $P$. Then $I^* = \langle f^*\mid f\in I,
f\neq 0\rangle$.
\end{theorem}

\begin{proof}
Denote $J = \langle f^*\mid f\in I,f\neq 0\rangle$. Clearly $J$ is a
multigraded $\N$-ideal of $\bP$ such that $\psi(J)\subset I$ and hence
$J\subset I^*$. Let $g\in I^*$ be a multihomogeneous element
and define $f = \psi(g)\in I$. Denote $\mu = \partial(f)$ and $\nu =
\partial(g)$. Since clearly $\nu_k\geq \mu_k$ for all $k$, one has
that $g = \prod_k t(k)^{\nu_k - \mu_k} f^*$ and hence $g\in J$.
\end{proof}

If $I\subset P$ is an $\N$-ideal one has clearly that $\psi(I^*) = I$.
Moreover, if $J\subset\bP$ is a multigraded $\N$-ideal then in general
$J\subset \psi(J)^*$.

\begin{definition}
Let $J\subset\bP$ be a multigraded $\N$-ideal. Define $\Sat(J) = \psi(J)^* =
\langle \psi(f)^* \mid f\in J,f\ \mbox{multihomogeneous} \rangle$.
Then $\Sat(J)\subset\bP$ is a multigraded $\N$-ideal containing $J$
that we call the {\em saturation} of $J$.
\end{definition}

\begin{definition}
Let $J\subset\bP$ be a multigraded $\N$-ideal. We say that $J$ is
{\em saturated} if $J$ coincides with its saturation $\Sat(J)$, that is,
if $f\in J$ is a multihomogeneous element then $\psi(f)^*\in J$. If $I$
is an $\N$-ideal of $P$ then its multihomogenization $I^*$ is clearly
a saturated ideal.
\end{definition}

Then, a bijective correspondence is given between all $\N$-ideals of $P$
and the saturated multigraded $\N$-ideals of $\bP$. One can characterize
such ideals in the following way.

\begin{theorem}
\label{satchar2}
Let $J\subset\bP$ be a multigraded $\N$-ideal. Then $J$ is saturated
if and only if $t(j)f\in J$ with $f\in\bP,j\geq 1$ implies that $f\in J$.
\end{theorem}

\begin{proof}
Suppose that $J$ is saturated and let $t(j) g\in J$ with $g\in\bF,j\geq 1$.
Since $J$ is multigraded, we can assume that $g$ is multihomogeneous.
Put $f = \psi(g) = \psi(t(j) g)$. Since $J$ is saturated and $t(j) g\in J$
we obtain that $f^*\in J$. Moreover, one has that $g =
\prod_k t(k)^{\mu_k} f^*\in J$ for some multidegree $\mu$ and hence $g\in J$.
Suppose now that $t(j) g\in J$ implies $g\in J$ and let $g\in J$ be a
multihomogeneous element. If $f = \psi(g)$ then clearly
$\prod_k t(k)^{\mu_k} f^* = g\in J$, for some $\mu$. We conclude that $f^*\in J$.
\end{proof}

\begin{corollary}
Let $J\subset\bP$ be a multigraded $\N$-ideal. Then, we have that $\Sat(J) =
\{f\in\bP\mid \prod_k t(k)^{\mu_k} f\in J,\ \mbox{for some multidegree}\ \mu\}$.
\end{corollary}

\begin{proof}
Put $J' = \{f\mid \prod_k t(k)^{\mu_k} f\in J,\ \mbox{for some}\ \mu\}$.
Let $i\geq 0$ and $f\in J'$, that is, $m f\in J$ for some $m =
\prod_k t(k)^{\mu_k}$. Since $J$ is an $\N$-ideal, we have that
$(i\cdot m)(i\cdot f) = i\cdot (m f)\in J$ where $(i\cdot m) =
\prod_k t(i+k)^{\mu_k}$. We conclude that $i\cdot f\in J'$.
With similar arguments one proves that $J'$ is a multigraded $\N$-ideal
of $\bP$ containing $J$. By Theorem \ref{satchar2} we obtain also
that $J'$ is a saturated ideal. Finally, we have clearly that $\psi(J') =
\psi(J)$ and hence $J' = \psi(J')^* = \psi(J)^* = \Sat(J)$.
\end{proof}


\section{Letterplace correspondence and $L$-saturation}

Consider the $K$-linear embedding $\iota:F\to P$ such that
$\iota(m) = x_{i_1}(1)\cdots x_{i_d}(d)$ for all monomials
$m = x_{i_1}\cdots x_{i_d}\in\Mon(F)$. This mapping was
introduced in \cite{DRS}. Note that the map $\iota$
preserves the total degree. Then, define $V = \bigoplus_d V_d$
the graded subspace of $P$ which is the image of map $\iota$.
For all $d\geq 0$, denote by $1^d$ the multidegree $\mu = (\mu_k)$ such that
$\mu_k = 1$ for $k\leq d$ and $\mu_k = 0$ otherwise. Clearly
one has that $V_d = P_{1^d}$.

\begin{definition}
Denote by $L = \bigcup_d V_d$ the set of multihomogeneous elements
of $V$. We call such elements the {\em multilinear elements of $P$}.
\end{definition}

There is a bijective correspondence between all graded ideals of $F$
and some class of multigraded $\N$-ideals of $P$. This class
is defined as follows.

\begin{definition}
Let $J$ be an $\N$-ideal of $P$. We call $J$ a {\em letterplace
ideal} or {\em $L$-ideal} or {\em multilinear $\N$-ideal}
if $J = \langle J\cap L \rangle_\N$, that is, $J$ is $\N$-generated
by multilinear elements. Clearly $J$ is a multigraded ideal.
\end{definition}

The following key result has been proved in \cite{LSL1}

\begin{theorem}
Let $I\subset F$ be a graded ideal and denote $J = \langle \iota(I) \rangle_\N$.
Then $J\subset P$ is an $L$-ideal. Conversely, let $J\subset P$ be
an $L$-ideal and denote $I = \iota^{-1}(J\cap V)$. Then $I\subset F$
is a graded ideal. Moreover, the mappings $I\mapsto J$ and $J\mapsto I$
define a bijective correspondence between graded ideals of $F$ and
letterplace ideals of $P$. Hence, we call $J$ the {\em letterplace analogue}
of $I$.
\end{theorem}

We assume now that the above result is extended to the algebras $\bF,\bP$.
Then, we make use of notations $\biota:\bF\to\bP, \bV = \Im\biota$ and
$\bL = \bigcup_d \bV_d$. Consider the letterplace analogue $D$ of the ideal
$C = 0^*$. In other words, we have that $D\subset\bP$ is the $\N$-ideal generated
by the multilinear elements $\biota([x_i,t]) = x_i(1)t(2) - t(1)x_i(2)$,
for all $i\geq 1$. Note that $D$ is not a saturated ideal. In fact, the ideal
$D$ contains the element $t(1) f$, but not $f = x_1(1)x_2(2) - x_2(1)x_1(2)$.
Moreover, its saturation $\Sat(D)$ is not an $L$-ideal, that is,
this ideal is not generated by multilinear elements. For instance,
the element $x_1(1)t(3) - t(1)x_1(3)\notin\bL$ is contained in $\Sat(D)$.
More generally, the letterplace analogue of a saturated ideal of $\bF$
is not saturated and its saturation is not a letterplace ideal.
This suggests that one needs a different notion of saturation for
such analogues that are in bijective correspondence with all ideals of $F$.
To motivate the following definition, note also that if $f$ and $t(j) f$
are multilinear elements then necessarily $j = \deg(f) + 1$.

\begin{definition}
Let $J\subset\bP$ be an $L$-ideal which contains $D$. We say that $J$ is
{\em $L$-saturated} or {\em multilinearly saturated} if $t(d+1) f\in J$
with $f\in\bL$ and $d = \deg(f)$ implies that $f\in J$.
\end{definition}

\begin{proposition}
Let $J\subset\bP$ be an $L$-ideal containing $D$. If we denote
$\Sat_L(J) = \langle f\in \bL\mid \prod_{d<k\leq d'} t(k) f\in J,
\ \mbox{for some}\ d'\geq d = \deg(f)\rangle_\N$ then $\Sat_L(J)$
is an $L$-saturated letterplace ideal containing $J$. We call $\Sat_L(J)$
the {\em $L$-saturation} or {\em multilinear saturation} of $J$ and
one has clearly that $\Sat_L(J)\subset\Sat(J)$.
\end{proposition}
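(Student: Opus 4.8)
The plan is to verify the three asserted properties of $\Sat_L(J)$ in turn: that it is a letterplace ideal, that it contains $J$, that it is $L$-saturated, and finally that it is contained in $\Sat(J)$. The containment $J\subset\Sat_L(J)$ is essentially immediate: any multilinear generator of $J$ lies in $J\cap\bL$, so taking $d'=d$ (with the empty product $\prod_{d<k\leq d}t(k)=1$) shows each such generator lies in the generating set of $\Sat_L(J)$; since $J=\langle J\cap\bL\rangle_\N$ is $\N$-generated by these elements, $J\subset\Sat_L(J)$.

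Next I would check that $\Sat_L(J)$ is a letterplace ideal. By construction it is $\N$-generated by multilinear elements, so the only thing to verify is that it is genuinely an $\N$-ideal, i.e. closed under the $\N$-action, which is built into the $\langle-\rangle_\N$ notation, and that it is well-defined. Here the key technical point is compatibility of the defining condition with the $\N$-action: if $\prod_{d<k\leq d'}t(k)\,f\in J$ with $f\in\bL$, $d=\deg(f)$, then applying $i\cdot(-)$ and using that $J$ is an $\N$-ideal gives $i\cdot\bigl(\prod_{d<k\leq d'}t(k)\,f\bigr)=\prod_{d<k\leq d'}t(i+k)\,(i\cdot f)\in J$. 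Since $i\cdot f\in\bL$ as well (shifting a multilinear element by $i$ keeps it multilinear, just with place indices $i+1,\dots,i+d$), one sees that $i\cdot f$ again satisfies the generating condition — so the generating set is $\N$-stable and $\Sat_L(J)$ is a well-defined $L$-ideal containing $J$.

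The main work is showing that $\Sat_L(J)$ is $L$-saturated, i.e. that $t(d+1)f\in\Sat_L(J)$ with $f\in\bL$, $d=\deg(f)$, forces $f\in\Sat_L(J)$. The hard part here is that $t(d+1)f$ lies in $\Sat_L(J)$ only as an $\N$-combination of the multilinear generators, not necessarily as one of them, so I cannot directly "peel off" the $t(d+1)$. I expect this to require the letterplace machinery reviewed earlier: one translates the statement back through the correspondence between $L$-ideals of $\bP$ and graded ideals of $\bF$ (extended to the barred algebras), where $\N$-generation by multilinear elements corresponds to ordinary ideal generation in $\bF$, and multiplication by $t(d+1)$ on a multilinear element of degree $d$ corresponds to multiplication by $t$ on the right (or by a power of $t$) in $\bF$. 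Under this translation the condition defining $\Sat_L$ becomes exactly the saturation condition $t^i\bar f\in\bar J$ for some $i$, and $L$-saturation of $\Sat_L(J)$ becomes the statement that the corresponding ideal of $\bF$ equals its own saturation — which follows from the already-proved fact (Proposition on $\Sat(J)=\{f\mid t^if\in J\}$) that the $t$-saturation of a graded ideal of $\bF$ is saturated, together with Proposition \ref{satchar}.

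Finally, the inclusion $\Sat_L(J)\subset\Sat(J)$: every multilinear generator $f$ of $\Sat_L(J)$ satisfies $\prod_{d<k\leq d'}t(k)\,f\in J\subset\Sat(J)$ for some $d'$, and $\prod_{d<k\leq d'}t(k)$ is a monomial in the $t(k)$; since $\Sat(J)$ is saturated, Proposition \ref{satchar2} (applied repeatedly, once for each factor $t(k)$) yields $f\in\Sat(J)$. As $\Sat(J)$ is an $\N$-ideal and the generators of $\Sat_L(J)$ all lie in $\Sat(J)$, we get $\Sat_L(J)=\langle\text{generators}\rangle_\N\subset\Sat(J)$. The one subtlety to handle carefully throughout is the bookkeeping of which place index a factor $t(k)$ occupies after an $\N$-shift, and the observation (already noted just before the definition) that $f$ and $t(j)f$ can both be multilinear only when $j=\deg(f)+1$, which is what makes the "staircase" product $\prod_{d<k\leq d'}t(k)$ the right object and guarantees no index collisions occur.
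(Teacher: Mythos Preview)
Your proposal is correct but follows a different route from the paper. The paper argues entirely within $\bP$: given $g\,t(d{+}1)\in J'=\Sat_L(J)$ with $g\in\bV_d$, it writes $g\,t(d{+}1)=\sum_i f_i\,(d_i\cdot g_i)$ where each $f_i\in\bV_{d_i}$ is one of the defining generators of $\Sat_L(J)$ (so $f_i\prod_{d_i<k\le d_i+l_i}t(k)\in J$ for some $l_i$) and $g_i\in\bV_{d+1-d_i}$; it then multiplies both sides by the further block $\prod_{d+1<k\le d+1+l}t(k)$ with $l=\max_i l_i$ and uses the commutation relations in $D\subset J$ to slide these extra $t$-factors past each $g_i$ into position adjacent to $f_i$, obtaining $g\prod_{d<k\le d+l+1}t(k)\in J$ directly and hence $g\in J'$.

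Your approach instead passes through the correspondence with $\bF$: you identify $\Sat_L(J)$ as the letterplace analogue of $\Sat(I')$ where $I'=\biota^{-1}(J\cap\bV)$, and then invoke that $\Sat(I')$ is saturated. This is sound and conceptually tidy, but the step ``$L$-saturation of $\Sat_L(J)$ becomes the statement that the corresponding ideal of $\bF$ equals its own saturation'' is precisely the content of Propositions~\ref{sat2lsat} and~\ref{lsat2sat}, which in the paper appear \emph{after} the present result. There is no logical circularity, since those propositions do not depend on this one, but your argument is not self-contained in the paper's ordering. The paper's direct rearrangement with $D$ avoids that forward reference; your translation buys a shorter argument once the saturated/$L$-saturated dictionary is in hand.
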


\begin{proof}
By definition, one has that $J' = \Sat_L(J)$ is an $L$-ideal
that contains $J\supset D$. Denote $m_l = \prod_{0<k\leq l} t(k)$
and suppose $g(d\cdot m_1)\in J'$ with $g\in \bV_d$ and $d\geq 0$.
It remains to prove that $g\in J'$, that is, the ideal $J'$
is $L$-saturated. By definition of $J'$ we have that $g(d\cdot m_1) =
\sum_i f_i (d_i\cdot g_i)$ with $f_i\in\bV_{d_i}, g_i\in\bV_{d-d_i+1}$
and $f_i (d_i\cdot m_{l_i})\in J$, for some $l_i\geq 0$.
If $l = \max\{l_i\}$ then the element $g (d\cdot m_{l+1}) =
g (d\cdot m_1)((d+1)\cdot m_l)$ is congruent modulo $D\subset J$ to
$\sum_i f_i (d_i\cdot m_l) ((d_i + l)\cdot g_i)\in J$ and therefore
$g\in J'$.
\end{proof}

\begin{theorem}
\label{sat2lsat}
Let $I\subset \bF$ be an ideal containing $C$ which is saturated
and denote by $J\subset\bP$ the letterplace analogue of $I$
(hence $D\subset J$). Then $J$ is an $L$-saturated ideal.
\end{theorem}

\begin{proof}
Assume $g t(d+1)\in J$ with $g\in \bV_d$, for some $d\geq 0$.
Then, let $f\in \bF_d$ such that $\biota(f) = g$. We have that
$\biota(f t) = g t(d+1)\in J\cap \bV$, that is, $f t\in I$ and therefore
$f\in I$ since $C\subset I$ is a saturated ideal. We conclude that
$g\in J$.
\end{proof}

\begin{proposition}
Let $J\subset\bP$ be a letterplace ideal containing $D$ which is $L$-saturated
and put $I = \biota^{-1}(J\cap \bV)\subset\bF$. Clearly $C\subset I$ and
one has that $I$ is a saturated ideal.
\end{proposition}

\begin{proof}
It is sufficient to reverse the argument of Theorem \ref{sat2lsat}.
\end{proof}

We obtain therefore a bijective correspondence between all ideals
of $F$ and the class of $L$-saturated letterplace ideals of $\bP$.
We call this bijection the {\em extended letterplace correspondence}.

\begin{definition}
Let $I$ be any ideal of $F$ and denote by $J\subset \bP$ the letterplace analogue
of $I^*\subset \bF$ (hence $C\subset I^*$ and $D\subset J$). We call $J$
the {\em extended letterplace analogue} of $I$. Clearly, one has that
$J = \langle \iota(f^*)\mid f\in I,f\neq 0 \rangle_\N + D$ and
$I = \varphi\biota^{-1}(J\cap \bV)$.
\end{definition}

With the notations of the above definition, by Theorem \ref{sat2lsat}
we have that $J = \Sat_L(J)$. Then, it is natural to ask what is
the ideal $\Sat(J)$ extending $J$.

Denote by $Q = K[X(1)]$ the polynomial algebra in the variables $x_i(1)$
($x_i\in X$) and consider the natural algebra epimorphism $\eta:F\to Q$
such that $x_i\mapsto x_i(1)$, for all $i\geq 1$. Assume that $\N$ acts on $Q$
in the trivial way, that is, $j\cdot x_i(1) = x_i(1)$ for any $j\geq 0$.
Then, one has the $\N$-algebra epimorphism $\theta:P\to Q$ such that
$x_i(j)\mapsto x_i(1)$, for all $i,j\geq 1$. The kernel of $\theta$
is clearly the $\N$-ideal $E$ generated by the elements
$x_i(1) - x_i(2)$, for all $i$. Note that $E = \psi(D)$ and
hence $E^* = \Sat(D)$. 

\begin{theorem}
Let $I$ be any ideal of $F$ and put $I' = \theta^{-1}{\eta(I)}$.
Clearly $I'\subset P$ is an $\N$-ideal containing $E$. Denote by
$J\subset \bP$ the extended letterplace analogue of $I$.
Then, we have that $\Sat(J) = {I'}^*$.
\end{theorem}

\begin{proof}
Since $J$ is a multigraded $\N$-ideal of $\bP$, it is sufficient
to show that $\psi(J) = I'$. Consider any element $g'\in I'$.
Clearly $g'$ is congruent modulo $E = \ker\theta$ to an element
$\eta(f)\in Q\subset P$, for some $f\in I$. If $\eta(f) = 0$ then
$g'\in E = \psi(D)$ where $D\subset J$. Otherwise, we have that $f\neq 0$
and one can consider $f^*\in I^*$ and hence $g = \biota(f^*)\in J$.
It is clear that $\theta\psi(g) = \eta(f)$, that is, $\psi(g)$ is congruent
modulo $E$ to the element $\eta(f)$. Then, $\psi(g)$ is congruent
also to $g'$, that is, $g' = \psi(g) + h$ with $h\in E$. Since $E = \psi(D)$
and $D\subset J$, we conclude that $g'\in\psi(J)$. With similar arguments
one proves also that $\psi(J)\subset I'$.
\end{proof}

Assume now one wants to compute the extended letterplace analogue
$J\subset\bP$ of any ideal $I\subset F$. If $I$ is given by a generating set $G$,
we may form the graded ideal $I' = C + \langle f^*\mid f\in G\rangle\subset\bF$
and then its letterplace analogue $J'\subset\bP$. One has clearly that
$\Sat(I') = I^*$ and $\Sat_L(J') = J$. It is well know that for the
commutative case \cite{BCR,Ei} a standard tool to compute saturation
consists in performing \Gr\ bases with respect to appropriate monomial
orderings. Aiming to have a similar method for $L$-saturation, in the
next section we review the \Gr\ bases theory for letterplace ideals that
has been introduced in \cite{LSL1,LSL2}.


\section{\Gr\ $\N$-bases of letterplace ideals}

Since letterplace ideals are a special class of $\N$-ideals, 
a first step consists in introducing monomial orderings for the polynomial
algebra $P$ that are compatible with the action of $\N$. Owing to the
Higman's Lemma, one can provide $P = K[X(\N^*)]$ by monomial orderings
even if the set $X(\N^*)$ is infinite. For that purpose, this lemma
can be stated in the following way (see for instance \cite{AH},
Corollary 2.3).

\begin{proposition}
\label{higman}
Let $\prec$ be a total ordering on $M = \Mon(P)$ such that
\begin{itemize}
\item[(i)] $1\preceq m$ for all $m\in M$;
\item[(ii)] $\prec$ is compatible with multiplication on $M$, that is,
if $m\prec n$ then $t m\prec t n $, for any $m,n,t\in M$.
\end{itemize}
If $\prec$ induces a well-ordering on the variables set $X(\N^*)\subset M$
then $\prec$ is also a well-ordering on $M$ and hence it is a monomial
ordering of $P$.
\end{proposition}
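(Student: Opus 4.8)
The plan is to deduce the statement from the classical Higman Lemma on well-quasi-orderings. One implication is immediate: the restriction of a well-ordering to any subset is again a well-ordering, so if $\prec$ is a monomial ordering of $P$ then in particular $\prec$ restricted to $X(\N^*)$ is a well-ordering. For the converse I would first record two elementary consequences of (i) and (ii). From (ii), if $x,y\in X(\N^*)$ with $x\preceq y$ then $x m\preceq y m$ for every $m\in M$ (apply (ii) with $x$ in place of $m$, $y$ in place of $n$, and multiplier $m$, using commutativity of $P$). From (i) and (ii), $m\preceq m m'$ for all $m,m'\in M$ (apply (ii) to $1\preceq m'$ with multiplier $m$). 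Iterating the first remark, if $m = v_1\cdots v_p$ and $n = w_1\cdots w_p$ are products of the same number of variables with $v_k\preceq w_k$ for each $k$, then $m\preceq n$; combining with the second remark, $m\preceq n n'$ for any further $n'\in M$.

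Now assume that $\prec$ restricted to $X(\N^*)$ is a well-ordering; being a total, well-founded order, it is in particular a well-quasi-ordering. Suppose, for contradiction, that $\prec$ is not a well-ordering on $M$; since $\prec$ is total, there is then an infinite strictly descending chain $m_1\succ m_2\succ\cdots$ in $M$. Write each $m_i$ as a word $w_i$ over the alphabet $X(\N^*)$, listing its variables with multiplicity in any fixed order. By Higman's Lemma --- namely, that the embedding (subword-domination) order on finite words over a well-quasi-ordered alphabet is again a well-quasi-ordering, which is the content of \cite{AH}, Corollary 2.3 --- there are indices $i<j$ and a strictly increasing map $\phi$ from the letter positions of $w_i$ to those of $w_j$ such that the $k$-th letter of $w_i$ is $\preceq$ the $\phi(k)$-th letter of $w_j$. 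Splitting $m_j = n\cdot n'$ into the product of the factors at positions in the image of $\phi$ and the remaining factors, the two remarks of the first paragraph give $m_i\preceq n\preceq n n' = m_j$, contradicting $m_i\succ m_j$.

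Hence $\prec$ admits no infinite strictly descending chain, so, being total, it is a well-ordering on $M$; together with the hypotheses (i) and (ii) this is precisely the assertion that $\prec$ is a monomial ordering of $P$. The only genuinely delicate point is the passage from the combinatorial embedding produced by Higman's Lemma to the algebraic inequality $m_i\preceq m_j$; this is exactly what the two consequences of (i) and (ii) are designed to supply, and everything else --- in particular the equivalence, for a total order, of "well-ordering" and "no infinite descending chain" --- is routine.
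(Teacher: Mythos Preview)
Your argument is correct. The paper itself does not supply a proof of this proposition: it simply states the result and refers the reader to \cite{AH}, Corollary~2.3, remarking that the proposition is essentially a restatement of Higman's Lemma. Your proposal spells out precisely that derivation --- extracting the two order-theoretic consequences of (i) and (ii), invoking Higman's Lemma on words over the well-quasi-ordered alphabet $X(\N^*)$, and translating the resulting subword embedding into the inequality $m_i\preceq m_j$ --- so your approach coincides with what the paper intends by its citation. One small remark: you write that Higman's Lemma ``is the content of \cite{AH}, Corollary~2.3''; in fact the paper is citing that corollary for the full statement of the proposition (i.e., the monomial-ordering consequence already drawn from Higman), not for Higman's Lemma itself, but this does not affect the mathematics.
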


We can easily assign well-orderings to the countable set $X(\N^*)$ which is 
in bijective correspondence to $\N^2$. Note now that the monoid $\N$ stabilizes
the variables set $X(\N^*)$ and hence the monomials set $M$. We have then
the following notion.

\begin{definition}
Let $\prec$ be a monomial ordering of $P$. We call $\prec$ a {\em (monomial)
$\N$-ordering of $P$} if $m\prec n$ implies that $i\cdot m\prec i\cdot n$,
for all $m,n\in M$ and $i\geq 0$.
\end{definition}

One defines a main class of $\N$-orderings of $P$ in the following way.
Denote $P(j) = K[x_i(j)\mid i\geq 1]$ and put $M(j) = \Mon(P(j))$.
Clearly $P = \bigotimes_{j\geq 1} P(j)$, that is, all monomials $m\in M$
can be factorized as $m = m_{j_1}\cdots m_{j_k}$, where $m_{j_s}\in M(j_s)$
and $j_1 > \ldots > j_k$.  Let $\rho:\N\to\End(P)$ be the monoid
homomorphism corresponding to the action of $\N$ over $P$.
For any $j\geq 0$, one has that the map $\rho(j)$ defines an isomorphism
between the monoids $M(1),M(j+1)$ and hence between the algebras $P(1),P(j+1)$.

\begin{definition}
Let $\prec$ be any monomial ordering of the subalgebra $P(1)\subset P$
and extend it to all subalgebras $P(j+1)$ $(j\geq 0)$ by the isomorphisms
$\rho(j)$. In other words, we put $j\cdot m\prec j\cdot n$ if and only if
$m\prec n$, for any $m,n\in M(1)$. Then, for all $m,n\in M,
m = m_{j_1}\cdots m_{j_k}, n = n_{j_1}\cdots n_{j_k}$ with $j_1 > \ldots > j_k$,
we define $m\prec' n$ if and only if $m_{j_s} = n_{j_s}$ and
$m_{j_t}\prec n_{j_t}$, for some $1\leq t\leq k$ and for all $1\leq s < t$.
By Proposition 3.7 in \cite{LS} one has that $\prec'$ is a monomial
$\N$-ordering that we call {\em place $\N$-ordering of $P$ induced by
a monomial ordering of $P(1)$}.
\end{definition}

Note that if $X$ is finite then $P(1)$ is a polynomial algebra in a finite
number of variables whose monomial orderings were classified in \cite{Ro}.
If $X$ is infinite, by Proposition \ref{higman} we have that the algebra $P(1)$
can be endowed anyway with monomial orderings provided that
$x_1(1)\prec x_2(1)\prec \ldots$.

An important feature of the place $\N$-orderings is that they are
compatible with some special grading of $P$ which is in turn compatible
with the action of $\N$. Denote $\hN = \{-\infty\}\cup\N$.

\begin{definition}
Let $\w:M\to\hN$ be the unique mapping such that
\begin{itemize}
\item[(i)] $\w(1) = -\infty$;
\item[(ii)] $\w(m n) = \max(\w(m),\w(n))$, for any $m,n\in M$;
\item[(iii)] $\w(x_i(j)) = j$, for all $i,j\geq 1$.
\end{itemize}
We call $\w$ the {\em weight function} of $P$. If $P_{(i)}\subset P$ is
the subspace spanned by all monomials of weight $i$ then
$P = \bigoplus_{i\in\hN} P_{(i)}$ is grading of $P$ over the idempotent
commutative monoid $(\hN,\max)$. Clearly, one has that $i\cdot P_{(j)}
\subset P_{(i+j)}$, for all $i,j$.
\end{definition}

\begin{definition}
Let $\prec$ be a monomial $\N$-ordering of $P$. We say that $\prec$ is a
{\em weighted ordering} if $\w(m) < \w(n)$ implies that $m\prec n$, for all
$m,n\in M$.
\end{definition}

By Proposition 5.11 in \cite{LS} one has that all place $\N$-orderings
are weighted ones. Note also that for multilinear monomials $m\in M\cap L$
one has that $\w(m) = \deg(m)$.

\begin{definition}
Let $\prec$ be a well-ordering of $W = \Mon(F)$. We call $\prec$ a {\em monomial
ordering} of $F$ if $m\prec n$ implies that $u m v\prec u n v$, for all
$m,n,u,v\in W$. In particular, we say that $\prec$ is a {\em graded ordering}
if $\deg(m) < \deg(n)$ implies that $m\prec n$, for any $m,n\in W$.
\end{definition}

\begin{theorem}
\label{freemord}
Let $\prec$ be a weighted $\N$-ordering of $P$ and define a total ordering
$\prec'$ of $W$ by putting $m\prec' n$ if and only if $\iota(m)\prec\iota(n)$,
for all $m,n\in W$. Then, the ordering $\prec'$ is a graded monomial
ordering of $F$ that we call {\em induced by $\prec$}.
\end{theorem}

\begin{proof}
It is clear that $\prec'$ is a well-ordering since the same holds
for the restriction of $\prec$ to $M\cap L$. Let $m',n',u',v'\in W$
and denote by $m,n,u,v\in M\cap L$ their images under the map $\iota$.
If $\deg(m') < \deg(n')$ then $\w(m) < \w(n)$ and hence $m\prec n$,
that is, one has that $m'\prec' n'$. Assume now $m'\prec' n'$.
If $\deg(m') < \deg(n')$ we have that $\deg(u' m' v') < \deg(u' n' v')$
and hence $u' m' v'\prec' u' n' v'$. If $d' = \deg(m') = \deg(n')$ and
$d = \deg(u')$ one obtains that $d\cdot m\prec d\cdot n$ since $\prec$
is an $\N$-ordering. We conclude that $\iota(u' m' v') =
u (d\cdot m)((d+d')\cdot v)\prec u (d\cdot n)((d+d')\cdot v) =
\iota(u' n' v')$, that is, we have that $u' m' v'\prec' u' n' v'$.
\end{proof}

The above result implies that a class of graded monomial orderings
of $F = K \langle X \rangle$ can be obtained from the class of weighted
$\N$-orderings of $P$ by restriction to $L$. In particular,
one has the following result.

\begin{corollary}
Let $\prec$ be any monomial ordering of $P(1)$ and extend it to a place
$\N$-ordering of $P$. Moreover, denote by $\prec'$ the graded monomial
ordering of $F$ induced by $\prec$ according to Theorem \ref{freemord}.
Then $\prec'$ is the graded right lexicographic order, that is,
for any $m = x_{i_1}\cdots x_{i_k}, n = x_{j_1}\cdots x_{j_k}\in W$
one has $m\prec' n$ if and only if $k < l$ or $k = l, i_s = j_s$ and
$i_t < j_t$, for some $1\leq t\leq k$ and for all $t < s\leq k$.
\end{corollary}

\begin{proof}
Note that if $X$ is an infinite set then necessarily
$x_1(i)\prec x_2(i)\prec \ldots$ and $x_1\prec' x_2\prec' \ldots$
because $\prec,\prec'$ are well-orderings. Then, one has that
$\iota(m) = x_{i_k}(k)\cdots x_{i_1}(1), \iota(n) =
x_{j_k}(k)\cdots x_{j_1}(1)$ and $\iota(m)\prec\iota(n)$ if and only if
$x_{i_s}(s) = x_{j_s}(s)$ and $x_{i_t}(t)\prec x_{j_t}(t)$, that is,
$i_s = j_s$ and $i_t < j_t$, for some $1\leq t\leq k$
and for all $t < s\leq k$.
\end{proof}

We start now introducing \Gr\ bases in the context of $\N$-ideals.
Fix $\prec$ any $\N$-ordering of $P$. Let $f = \sum_i c_i m_i\in P$ with
$m_i\in M, c_i\in K,c_i\neq 0$. We denote $\lm(f) = m_k =
\max_\prec\{m_i\}$, $\lc(f) = c_k$ and $\lt(f) = \lc(f)\lm(f)$.
Let $f,g\in P,f,g\neq 0$ and put $\lt(f) = c m, \lt(g) = d n$
with $m,n\in M$ and $c,d\in K$. If $l = \lcm(m,n)$ we define
as usual the {\em S-polynomial}
\[
\spoly(f,g) = (l/c m) f - (l/d n) g.
\]
Finally, if $G\subset P$ then we put $\lm(G) = \{\lm(f) \mid f\in G,f\neq 0\}$
and we denote by $\LM(G)$ the ideal of $P$ generated by $\lm(G)$.
The following results were proved in \cite{LSL1,LSL2}.

\begin{proposition}
Let $G\subset P$. Then $\lm(\N\cdot G) = \N\cdot \lm(G)$.
In particular, if $I$ is an $\N$-ideal of $P$ then $\LM(I)$
is also $\N$-ideal.
\end{proposition}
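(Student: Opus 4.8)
The plan is to base everything on one observation: a monomial $\N$-ordering $\prec$ is preserved, not merely respected in one direction, by each shift map $\rho(i)$ restricted to $M=\Mon(P)$. Indeed $\rho(i)$ is an algebra \emph{mono}morphism of $P$ stabilizing $M$, hence injective on monomials, and $\prec$ is a total order, so the implication ``$m\prec n\Rightarrow i\cdot m\prec i\cdot n$'' from the definition of $\N$-ordering upgrades to an equivalence. From this I would immediately deduce the key lemma $\lm(i\cdot f)=i\cdot\lm(f)$ for every nonzero $f\in P$ and every $i\geq 0$: writing $f=\sum_j c_j m_j$ as its reduced expansion with $m_k=\lm(f)$, the images $i\cdot m_j$ are again pairwise distinct, so $i\cdot f=\sum_j c_j(i\cdot m_j)$ is the reduced expansion of $i\cdot f$ (in particular $i\cdot f\neq 0$), and $i\cdot m_k$ is its $\prec$-largest monomial.

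The first assertion then follows by a set computation. Since $\N\cdot G=\{i\cdot f\mid i\geq 0,\ f\in G\}$ and $i\cdot f\neq 0$ precisely when $f\neq 0$, the lemma gives $\lm(\N\cdot G)=\{i\cdot\lm(f)\mid f\in G,\ f\neq 0,\ i\geq 0\}=\N\cdot\lm(G)$.

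For the ``in particular'' part, if $I$ is a $\N$-ideal then $\N\cdot I\subset I$ yields $\N\cdot\lm(I)=\lm(\N\cdot I)\subset\lm(I)$, so the monomial set $\lm(I)$ is $\N$-stable; and then the ideal $\LM(I)=\langle\lm(I)\rangle$ it generates is $\N$-invariant because applying the algebra map $\rho(i)$ to a sum $\sum_j p_j s_j$ ($p_j\in P$, $s_j\in\lm(I)$) produces $\sum_j(i\cdot p_j)(i\cdot s_j)\in\LM(I)$.

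I do not expect a genuine obstacle here; the only subtlety worth isolating is the upgrade from one-directional compatibility to honest order-preservation, which is exactly where injectivity of the shift and totality of $\prec$ are used. Everything else is routine manipulation of reduced expansions and of ideal membership.
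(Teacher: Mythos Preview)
Your argument is correct and complete. The paper itself does not supply a proof of this proposition; it merely records the statement and defers to the earlier papers \cite{LSL1,LSL2}. Your approach---upgrading the one-sided compatibility $m\prec n\Rightarrow i\cdot m\prec i\cdot n$ to an equivalence via injectivity of the shift on monomials and totality of $\prec$, extracting the key lemma $\lm(i\cdot f)=i\cdot\lm(f)$, and then reading off both assertions---is the natural one and is essentially what appears in those references.
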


\begin{definition}
Let $I\subset P$ be an $\N$-ideal and $G\subset I$. We call $G$
a {\em \Gr\ $\N$-basis} of $I$ if $\lm(G)$ is an $\N$-basis of $\LM(I)$.
In other words, $\N\cdot G$ is a \Gr\ basis of $I$ as an ideal of $P$.
\end{definition}

\begin{definition}
Let $f\in P,f\neq 0$ and $G\subset P$. If $f = \sum_i f_i g_i$ with
$f_i\in P,g_i\in G$ and $\lm(f)\succeq\lm(f_i)\lm(g_i)$ for all $i$,
we say that {\em $f$ has a \Gr\ representation with respect to $G$}.
\end{definition}

\begin{theorem}
\label{Ncrit}
Let $G$ be an $\N$-basis of an $\N$-ideal $I\subset P$.
Then, $G$ is a \Gr\ $\N$-basis of $I$ if and only if for all
$f,g\in G,f,g\neq 0$ and for any $i\geq 0$, the S-polynomial
$\spoly(f, i\cdot g)$ has a \Gr\ representation with respect
to $\N\cdot G$.
\end{theorem}

For the sake of completeness, we recall also the notion of \Gr\ bases
for ideals of the free associative algebra. For any subset $G\subset F$,
define $\lm(G)$ and $\LM(G)$ as we have done for $P$.

\begin{definition}
Let $I\subset F$ be an ideal and $G\subset I$. We call $G$ a {\em \Gr\ basis}
of $I$ if $\lm(G)$ is a basis of $\LM(I)$. In other words, for any $f\in I,
f\neq 0$ one has that $\lm(f) = u \lm(g) v$, for some $g\in G,g\neq 0$ and
$u,v\in W$.
\end{definition}

From now on, {\em assume} that $P$ is endowed with a weighted $\N$-ordering
and $F$ is endowed with the induced graded monomial ordering. By abuse
of notation, we will denote both these orderings as $\prec$. We mention finally
the following key result proved in \cite{LSL1} for \Gr\ $\N$-bases
of letterplace ideals.

\begin{theorem}
\label{GBlettcorr}
Let $I\subset F$ be a graded ideal and denote by $J\subset P$ its letterplace
analogue. If $G$ is a multihomogeneous \Gr\ $\N$-basis of $J$ then
$\iota^{-1}(G\cap L)$ is a homogeneous \Gr\ basis of $I$.
\end{theorem}

This result together with the Theorem \ref{Ncrit} implies the following
algorithm for the computation of homogeneous noncommutative \Gr\ bases which
is alternative to the classical method developed in \cite{Gr, Mo, Uf1, Uf2}.

\suppressfloats[b]
\begin{algorithm}\caption{\HFreeGBasis}
\begin{algorithmic}[0]
\State \text{Input:} $H$, a homogeneous basis of a graded ideal $I\subset F$.
\State \text{Output:} $\iota^{-1}(G)$, a homogeneous \Gr\ basis of $I$.
\State $G:= \iota(H)$;
\State $B:= \{(f,g) \mid f,g\in G\}$;
\While{$B\neq\emptyset$}
\State choose $(f,g)\in B$;
\State $B:= B\setminus \{(f,g)\}$;
\ForAll{$i\geq 0$ \st\ $\gcd(\lm(f),\lm(i\cdot g))\neq 1,
\lcm(\lm(f),\lm(i\cdot g))\in L$}
\State $h:= \Reduce(\spoly(f,i\cdot g), \N\cdot G)$;
\If{$h\neq 0$}
\State $B:= B\cup\{(h,h),(h,k),(k,h),\mid k\in G\}$;
\State $G:= G\cup\{h\}$;
\EndIf;
\EndFor;
\EndWhile;
\State \Return $\iota^{-1}(G)$.
\end{algorithmic}
\end{algorithm}

The function $\Reduce$ is given by the following standard routine.

\suppressfloats[b]
\begin{algorithm}\caption{\Reduce}
\begin{algorithmic}[0]
\State \text{Input:} $G\subset P$ and $f\in P$.
\State \text{Output:} $h\in P$ such that $f - h\in\langle G\rangle$
and $h = 0$ or $\lm(h)\notin\LM(G)$.
\State $h:= f$;
\While{ $h\neq 0$ and $\lm(h)\in\LM(G)$ }
\State choose $g\in G,g\neq 0$ such that $\lm(g)$ divides $\lm(h)$;
\State $h:= h - (\lt(h)/\lt(g)) g$;
\EndWhile;
\State \Return $h$.
\end{algorithmic}
\end{algorithm}

Note that the iteration ``for all $i\geq 0$ \st\ $\ldots$''
in the procedure $\HFreeGBasis$ runs over a finite
number of integers since condition $\gcd(\lm(f),\lm(i\cdot g))\neq 1$
implies that $i < \w(f) = \deg(f)$. Moreover, by multihomogeneity of the 
elements of $P$ involved in the computation, one has that the condition
$\lcm(\lm(f),\lm(i\cdot g))\in L$ is equivalent to require that the element
$h = \Reduce(\spoly(f,i\cdot g), \N\cdot G)$ is multilinear.
Note finally that there are clearly a finite number of elements
of the infinite set $\lm(\N\cdot G) = \N\cdot\lm(G)$ that may participate
to such reduction.
Even if one assumes that $X$ is a finite set, owing to non-Noetherianity
of the free associative algebra $F = K\langle X \rangle$ or of the polynomial
algebra $P = K[X(\N^*)]$ that has an infinite number of variables,
we may have that the ideal $I\subset F$ is finitely generated but the leading
monomial ideal $\LM(I)$ is not such, that is, the \Gr\ bases of $I$ are
infinite sets. In other words, we do not have general termination 
for the algorithm $\HFreeGBasis$ but termination is clearly provided
for truncated computations up to some fixed degree, assuming
that the ideal $I$ is finitely generated within such degree.
For more details about the above algorithm we refer to \cite{LSL1,LSL2}.


\section{\Gr\ $L$-bases and $L$-saturation}

The fact that the letterplace ideals are $\N$-generated by multilinear
elements and Theorem \ref{GBlettcorr} suggest that for such ideals
one needs a notion of \Gr\ basis that involves only multilinear
elements.

\begin{definition}
Let $J$ be an $L$-ideal of $P$ and let $H\subset J\cap L$ be a subset
of multilinear elements. If $H$ is an $\N$-basis of $J$ then
we call $H$ a {\em $L$-basis} or {\em multilinear $\N$-basis} of $J$.
\end{definition}

\begin{definition}
\label{GLB}
Let $J\subset P$ be an $L$-ideal and denote $\LM_L(J) =
\langle \lm(f)\mid f\in J\cap L \rangle_\N$. Let $G\subset J\cap L$
be a subset of multilinear elements. We call $G$ a {\em \Gr\ $L$-basis}
or {\em \Gr\ multilinear $\N$-basis} of $J$ if $\lm(G)$ is an $\N$-basis
of $\LM_L(J)$, that is, for all multilinear elements $f\in J\cap L$ one has
that $i\cdot \lm(g)$ divides $\lm(f)$, for some $g\in G$ and $i\geq 0$.
Clearly, all \Gr\ $L$-bases are also $L$-bases of letterplace ideals.
\end{definition}

If $I$ is a graded ideal of $F$ and $J\subset P$ is its letterplace analogue,
by Theorem \ref{GBlettcorr} one has that $G\subset J\cap L$ is a
\Gr\ $L$-basis of $J$ if and only if $\iota^{-1}(G)$ is a homogeneous
\Gr\ basis of $I$. In this sense, we may say that \Gr\ $L$-bases are
``letterplace analogues'' of homogeneous \Gr\ bases of the free associative
algebra. Another interesting feature of \Gr\ $L$-bases is that they can
be obtained as complete multihomogeneous \Gr\ $\N$-bases of suitable ideals.

\begin{definition}
Denote $N = \langle x_i(1)x_j(1)\mid i,j\geq 1 \rangle_\N\subset P$.
A monomial $m = x_{i_1}(j_1)\cdots x_{i_d}(j_d)\in M$ is said to be
{\em normal modulo $N$} if $j_1\neq \ldots \neq j_d$. A polynomial
$f\in P$ is {\em in normal form modulo $N$} if all its monomials
are normal modulo $N$.
\end{definition}

\begin{definition}
Let $J\subset P$ be an $\N$-ideal containing $N$ and let $G\subset J$ be
a subset of polynomials that are in normal form modulo $N$. We say that
$G$ is a {\em \Gr\ $\N$-basis of $J$ modulo $N$} if
$G\cup\{x_i(1)x_j(1)\mid i,j\geq 1\}$ is a \Gr\ $\N$-basis of $J$.
\end{definition}

\begin{theorem}
\label{ntrick}
Let $J$ be an $L$-ideal of $P$ and let $G\subset J\cap L$. Then $G$ is
a \Gr\ $L$-basis of $J$ if and only if $G$ is a multihomogeneous \Gr\
$\N$-basis of $J + N$ modulo $N$.
\end{theorem}

\begin{proof}
It is sufficient to prove that there is a \Gr\ $\N$-basis of $J + N$
modulo $N$ whose elements are all multilinear. Then, consider to apply
the Buchberger algorithm to an $L$-basis of $J$. By the product
criterion and multihomogeneity of the computation, it is clear that
for the monomials $m = \lcm(\lm(f),i\cdot\lm(g))$ where $f,g$ are elements
of the current $\N$-basis, one has that either $m$ is multilinear or $m\in N$.
\end{proof}

Note that the above result, together with the comments after Definition
\ref{GLB}, provide another insight into the subtle relationships between
noncommutative structures and their commutative analogues subjected
to the action of the monoid $\N$.

Let us extend now the results of Section 4 and the previous ones to the
algebras $\bF,\bP$. In what follows, {\em assume} the polynomial algebra
$\bP$ be endowed with a place $\N$-ordering which is induced by a monomial ordering
of $\bP(1)$ such that $t(1)\prec x_1(1)\prec x_2(1)\prec\ldots$. Therefore,
the free associative algebra $\bF$ is provided with the graded right
lexicographic ordering such that $t\prec x_1\prec x_2\prec\ldots$.
One obtains immediately the following result.

\begin{proposition}
The elements $\biota([t,x_i]) = t(1) x_i(2) - x_i(1) t(2)$ $(i\geq 1)$
are a \Gr\ $L$-basis of the $L$-ideal $D$, that is, the
commutators $[t,x_i]$ are a homogeneous \Gr\ basis of the graded
ideal $C$. Then, a multilinear element $f\in\bL$ is said to be {\em in normal
form modulo $D$} if it is such with respect to the above \Gr\ $L$-basis.
\end{proposition}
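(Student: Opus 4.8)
I want to show that the set $B=\{\biota([t,x_i])\mid i\ge 1\}=\{t(1)x_i(2)-x_i(1)t(2)\mid i\ge 1\}$ is a \Gr\ $L$-basis of the letterplace ideal $D=\langle B\rangle_\N$; the statement about the commutators $[t,x_i]$ being a homogeneous \Gr\ basis of $C$ will then follow immediately from Proposition \ref{GBlettcorr} (applied to $I=C$, $J=D$), since $\iota^{-1}(B)=\{[t,x_i]\mid i\ge 1\}$. So the whole task reduces to checking the \Gr\ $L$-basis condition for $B$.

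**Key steps.** By Proposition \ref{ntrick}, it is equivalent to prove that $B$ is a multihomogeneous \Gr\ $\N$-basis of $D+N$ modulo $N$, so I may apply the Buchberger-type criterion of Proposition \ref{Ncrit} to the $\N$-basis $B$, working modulo $N$ and discarding any S-polynomial whose critical monomial lies in $N$. First I record the leading monomials: with the chosen place $\N$-ordering induced by $t(1)\prec x_1(1)\prec x_2(1)\prec\ldots$, one compares two monomials of the same multidegree place-by-place from the highest place index downward, so in $t(1)x_i(2)-x_i(1)t(2)$ the place-$2$ factor is decisive and $\lm(\biota([t,x_i]))=x_i(1)t(2)$ (since $x_i(2)\succ t(2)$). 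Thus $\lm(B)=\{x_i(1)t(2)\mid i\ge 1\}$. Next I form the S-polynomials $\spoly(f_i,\,k\cdot f_j)$ where $f_i=t(1)x_i(2)-x_i(1)t(2)$ and $k\ge 0$. The leading monomials $x_i(1)t(2)$ and $k\cdot\lm(f_j)=x_j(k+1)t(k+2)$ share a variable only when $k=0$ (giving the pair $x_i(1)t(2)$, $x_j(1)t(2)$, overlapping in $t(2)$) or when $k=1$ (giving $x_i(1)t(2)$, $x_j(2)t(3)$, no common variable unless $i$... actually they overlap in nothing) — a short case check shows the only genuine overlaps are at $k=0$. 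For $k=0$, $i\ne j$: $\lcm(x_i(1)t(2),x_j(1)t(2))=x_i(1)x_j(1)t(2)\in N$, so this S-polynomial is discarded by the modulo-$N$ convention. For $k=0$, $i=j$: the S-polynomial is $0$. Hence every relevant S-polynomial either vanishes or has its critical monomial in $N$, so the criterion is satisfied and $B$ is a \Gr\ $\N$-basis of $D+N$ modulo $N$; by Proposition \ref{ntrick} it is a \Gr\ $L$-basis of $D$.

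**Main obstacle.** The only delicate point is the bookkeeping for which pairs $(f_i,k\cdot f_j)$ actually give an overlap of leading terms and confirming that every such overlap lands in $N$: one must be careful that the factor $k\cdot f_j$ has leading monomial $x_j(k+1)t(k+2)$ and that for $k\ge 1$ this is coprime to $x_i(1)t(2)$ (the place indices $\{k+1,k+2\}$ and $\{1,2\}$ are disjoint once $k\ge 2$, and for $k=1$ they meet only in place $2$, where $f_i$ contributes $t(2)$ but $1\cdot f_j$ contributes $x_j(2)$, so still no common variable). Everything else — multihomogeneity of the computation, the fact that the iteration over $k$ is finite, and the translation back through $\iota$ — is routine given the results of Sections 3–5. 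Finally, the concluding sentence of the statement (the definition of ``normal form modulo $D$'' for multilinear $f\in\bL$) is a definition and requires no proof.
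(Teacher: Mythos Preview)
Your overall strategy is sound, but you have the leading monomial backwards. You correctly note that the place-$2$ factor is decisive and that $x_i(2)\succ t(2)$; but this means the monomial whose place-$2$ part is $x_i(2)$ is the larger one, so
\[
\lm\bigl(\biota([t,x_i])\bigr)=t(1)x_i(2),
\]
not $x_i(1)t(2)$. (You can confirm this against the paper's worked example in Section~6, where $\spoly(d_1,1\cdot g_1)=-x(1)t(2)x(3)+t(1)t(2)t(3)$ is computed using $\lm(d_1)=t(1)x(2)$.) Consequently your entire S-polynomial bookkeeping is carried out with the wrong set $\lm(B)$.

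Fortunately the correction does not change the outcome. With the true leading monomials $\lm(f_i)=t(1)x_i(2)$ and $\lm(k\cdot f_j)=t(k+1)x_j(k+2)$: for $k=0$ and $i\neq j$ one gets $\lcm=t(1)x_i(2)x_j(2)$, which is not multilinear (two variables at place~$2$), so this pair is discarded; for $k=0$, $i=j$ the S-polynomial is zero; for $k\ge 1$ the two leading monomials share no variable (at $k=1$ the supports are $\{t(1),x_i(2)\}$ and $\{t(2),x_j(3)\}$, disjoint; for $k\ge 2$ the place sets themselves are disjoint), so the product criterion applies. Hence the Buchberger criterion is satisfied exactly as you intended, and $B$ is a \Gr\ $L$-basis of $D$.

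For comparison, the paper gives no proof at all: it simply writes ``One obtains immediately the following result'', presumably because in $\bF$ the leading words $\lm([t,x_i])=t x_i$ admit no overlaps (no proper suffix of $t x_i$ equals a proper prefix of $t x_j$), so the noncommutative Buchberger criterion is vacuously satisfied. Your route through Propositions~\ref{ntrick} and~\ref{Ncrit} is a legitimate and more explicitly letterplace-side argument; it just needs the sign of the ordering comparison fixed.
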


\begin{definition}
Let $J\subset\bP$ be an $L$-ideal which contains $D$ and let $G\subset J\cap\bL$
be a subset of multilinear elements that are in normal form modulo $D$.
We say that $G$ is a {\em \Gr\ $L$-basis of $J$ modulo $D$} if
$G\cup\{\biota([t,x_i])\mid i\geq 1\}$ is a \Gr\ $L$-basis of $J$.
\end{definition}

A natural characterization of the $L$-saturation of a letterplace ideal
containing $D$ is the following one.

\begin{lemma}
\label{lsatchar}
Let $J\subset\bP$ be an $L$-ideal containing $D$. Then, a \Gr\ $L$-basis
of $\Sat_L(J)$ modulo $D$ is given by the elements $\psi(f)^*$,
for all $f\in J\cap\bL$ that are in normal form modulo $D$.
\end{lemma}

\begin{proof}
It is sufficient to note that if $f\in\bL$ is in normal form
modulo $D$ then $\psi(f)\in V$ and $g = \psi(f)^*\in\bL$. Moreover,
it is clear that $f = \prod_{d<k\leq d'} t(k) g$ where
$\deg(f) = d'\geq d = \deg(g)$.
\end{proof}

\begin{theorem}
\label{satgb}
Let $J\subset\bP$ be an $L$-ideal which contains $D$ and denote by $J' = \Sat_L(J)$
its $L$-saturation. Moreover, let $G$ be a \Gr\ $L$-basis of $J$ modulo $D$.
Then $G' = \psi(G)^* = \{ \psi(g)^* \mid g\in G \}$ is a \Gr\ $L$-basis
of $J'$ modulo $D$.
\end{theorem}

\begin{proof}
Note that if $f'\in\bL$ is in normal form modulo $D$ and $f = \psi(f')\in V$
then $\lm(f^*) = \lm(f)\in M$ by definition of the monomial ordering of $\bP$.
Now, let $f'\in J\cap\bL$ be an element in normal form modulo $D$.
Hence, there is $g'\in G$ and $h\geq 0$ such that $h\cdot\lm(g')$ divides
$\lm(f')$. Put $f = \psi(f'), g = \psi(g')$ and $m_i = \prod_{0<j\leq i} t(j)$. 
Then, one has that $f' = f^*(i\cdot m_j), g' = g^*(k\cdot m_l)$ where
$i = \deg(f), k = \deg(g)$ and $j,l\geq 0$. From $h\cdot\lm(g')$ divides
$\lm(f')$ if follows that $h + k\leq i$ and hence $k\cdot \lm(g^*)$ divides
$\lm(f^*)$. We conclude that $\psi(G)^*$ is a \Gr\ $L$-basis of $\Sat_L(J)$
modulo $D$.
\end{proof}

From the above result one obtains immediately an algorithm for computing
\Gr\ $L$-bases of $L$-saturated letterplace ideals of $\bP$ containing $D$.
This is especially relevant since such bases are in correspondence with
homogeneous \Gr\ bases of saturated ideals of $\bF$ containing $C$.
In fact, the \Gr\ bases of any ideal $I\subset F$ are in correspondence
with the homogeneous ones of its homogenization $I^*$. 

\begin{definition}
A homogeneous element $f\in\bF$ is said to be {\em in normal form modulo $C$}
if it is such with respect to the \Gr\ basis $\{[t,x_i]\mid i\geq 1\}$.
In other words, the multilinear element $\biota(f)\in\bL$ is in normal form
modulo $D$.
\end{definition}

Note that $\biota(f^*) = \iota(f)^*$, for all $f\in F, f\neq 0$. Moreover,
if $f\in\bF$ is a homogeneous element in normal form modulo $C$ then
we have also that $\iota(\varphi(f)) = \psi(\biota(f))$.

\begin{definition}
Let $I\subset\bF$ be a graded ideal which contains $C$ and let $G\subset I$
be a subset of homogeneous elements which are in normal form modulo $C$.
We say that $G$ is a {\em \Gr\ basis of $I$ modulo $C$} if
$G\cup\{[t,x_i]\mid i\geq 1\}$ is a \Gr\ basis of $I$.
In other words, $\biota(G)\subset\bL$ is a \Gr\ $L$-basis modulo $D$
of the letterplace analogue of $I$.
\end{definition}

The following result can be found also in \cite{LiSu,Uf3}.

\begin{theorem}
\label{homgb}
Let $I\subset F$ be any ideal and let $G$ be any \Gr\ basis of $I$.
Then $G^* = \{ g^* \mid g\in G \}$ is a homogeneous \Gr\ basis of $I^*$
modulo $C$. Moreover, one has that $\lm(G^*) = \lm(G)$.
\end{theorem}

\begin{proof}
Let $f'\in I^*$ be a homogeneous element in normal form modulo $C$
and put $f = \varphi(f')$. Then $f' = f^* t^i$ for some $i\geq 0$ and
$\lm(f) = u \lm(g) v$ for some $g\in G$ and $u,v\in W$. Since
$\lm(f^*) = \lm(f), \lm(g^*) = \lm(g)$ we conclude that $\lm(f') =
u \lm(g^*) v t^i$.
\end{proof}

The above result and Theorem \ref{satgb} imply an alternative algorithm
to compute \Gr\ bases of nongraded noncommutative ideals of the free
associative algebra via homogeneous commutative computations
in their extended letterplace analogues.

\suppressfloats[b]
\begin{algorithm}\caption{FreeGBasis}
\begin{algorithmic}[0]
\State \text{Input:} $H$, a basis of an ideal $I\subset F$.
\State \text{Output:} $\varphi(\biota^{-1}(G))$, a \Gr\ basis of $I$.
\State $G:= \biota(H^* \cup \{[t,x_i] \mid i\geq 1\})$;
\State $B:= \{(f,g) \mid f,g\in G\}$;
\While{$B\neq\emptyset$}
\State choose $(f,g)\in B$;
\State $B:= B\setminus \{(f,g)\}$;
\ForAll{$i\geq 0$ \st\ $\gcd(\lm(f),\lm(i\cdot g))\neq 1,
\lcm(\lm(f),\lm(i\cdot g))\in\bL$}
\State $h:= \Reduce(\spoly(f,i\cdot g), \N\cdot G)$;
\If{$h\neq 0$}
\State $h:= \psi(h)^*$
\State $B:= B\cup\{(h,h),(h,k),(k,h)\mid k\in G\}$;
\State $G:= G\cup\{h\}$;
\EndIf;
\EndFor;
\EndWhile;
\State \Return $\varphi(\biota^{-1}(G))$.
\end{algorithmic}
\end{algorithm}

\newpage
\begin{theorem}
The algorithm \FreeGBasis\ is correct.
\end{theorem}

\begin{proof}
Let $J\subset\bP$ be the extended letterplace analogue of $I\subset F$.
At each step of the procedure \FreeGBasis, the set $G$ is clearly an $L$-basis
of an ideal $J'\subset \bP$ containing $D$ such that $\Sat_L(J') = J$. Moreover,
since the elements $\biota([t,x_i])$ initially belong to $G$ we have the automatic
normalization modulo $D$ of the elements obtained during the computation.
Recall now that if $h\in\bL$ is a multilinear element which is in normal form
modulo $D$ then $h' = \psi(h)^*$ divides $h$. This implies that if an S-polynomial
can be reduced to zero by adding $h$ to the basis $G$, the same holds if
we substitute $h$ with $h'$. In case of termination, one has therefore that
the set $G$ is a \Gr\ $L$-basis of $J'$ whose elements satisfy $h = \psi(h)^*$.
By Theorem \ref{satgb} we conclude that $J'$ is $L$-saturated, that is,
one has that $J' = J$. Then $G' = \biota^{-1}(G)$ is homogeneous \Gr\ basis
of $I^*$, that is, $\varphi(G')$ is a \Gr\ basis of $I$ by Theorem \ref{homgb}.
\end{proof}

Note that the above algorithm has neither general termination nor
just termination up to some fixed degree $d$. The reason is that even
if all computations are homogeneous, because of the saturation
$h = \psi(h)^*$ that may decrease the current degree we cannot be sure at some
suitable step that we will not get additional elements of degree $\leq d$
in the steps that will follow. This agrees with the well known fact that
the word-problem is generally undecidable for nongraded associative algebras
even if these are finitely generated. Nevertheless, if an ideal of the
free associative algebra has a finite \Gr\ basis then the algorithm
\FreeGBasis\ is able to compute it in a finite number of steps.

\begin{definition}
Let $G\subset F$ be any subset. We call $G$ a {\em minimal \Gr\ basis}
if $\lm(G)$ is a minimal basis of $\LM(G)$, that is, $\lm(f)\neq u \lm(g) v$,
for all $f,g\in G,f\neq g$ and for any $u,v\in W$.
\end{definition}

By the choice of the monomial ordering of $\bF$ and the property that
the elements are kept in normal form modulo $C$ we have clearly that
if $G'$ is a minimal \Gr\ basis of $I^*$ modulo $C$ then $\varphi(G')$
is also a mimimal \Gr\ basis of $I$ since $\lm(G') = \lm(\varphi(G'))$.
This is the main advantage to compute on the fly the homogenization
$I^*$ instead of working with any graded ideal $C\subset I'\subset\bF$
such that $\varphi(I') = I$. In fact, the ideal $I'$ may have an infinite
minimal \Gr\ basis even if $I$ has a finite one and more generally this
basis has elements in higher degrees than the basis of $I^*$. In other
words, to compute without saturation is usually very inefficient.
Such strategy is described in \cite{Uf3} in the context of classical
algorithm and called ``rabbit strategy'' or ``cancellation rule''.

Note that actual computations with the algorithm \FreeGBasis\ are
performed by bounding the weight of the variables of $P$, that is,
in a (Noetherian) polynomial algebra with a finite number
of variables. This may result in an incomplete computation because
some of the S-polynomials may be not defined owing to this bound.
Since the S-polynomials $s = \spoly(f,i\cdot g)$ such that
$\gcd(\lm(f),\lm(i\cdot g))\neq 1$ that are considered in the
procedure are multilinear elements, it is clear that $\w(s) =
\deg(s)\leq 2d - 1$ where $d = \max\{\deg(f)\mid f\in G\}$ and $G$
is the current basis. We conclude that an actual computation
is certified complete if the weight bound for the variables
of $P$ is $\geq 2d - 1$, where $d$ is the maximal degree occuring
in the output generators.


\section{An illustrative example}

With the aim of showing a concrete computation with the algorithm
\FreeGBasis, we present here a simple application to finitely presented
(noncommutative) groups. Consider the symmetric group $S_3$ that can be
presented, as a Coxeter group, in the following way
\[
S_3 = \langle x,y\mid x^2 = y^2 = (x y)^3 = 1 \rangle.
\]
Define the free associative algebra $F = K\langle x, y \rangle$ and
consider the elements
\[
f_1 = x^2 - 1, f_2 = y^2 - 1, f_3 = (x y)^3 - 1\in F.
\]
Then, the group algebra $K S_3$ is clearly isomorphic to the quotient
algebra $F/I$ where $I = \langle f_1, f_2, f_3 \rangle$. A next step
is to consider the free commutative $\N$-algebra
$\bP = K[x(1),y(1),t(1),x(2),y(2),t(2),\ldots]$ and to encode
the noncommutative algebra $F/I$ in the letterplace way, that is,
by defining the extended letterplace analogue $J\subset \bP$
of the two-sided ideal $I\subset F$. As explained in the comments
at the end of Section 3, we consider therefore the polynomials
\[
\begin{array}{c}
d_1 = \biota([t,x]) = t(1)x(2) - x(1)t(2),
d_2 = \biota([t,y]) = t(1)y(2) - y(1)t(2), \\
g_1 = \biota(f_1^*) = x(1)x(2) - t(1)t(2),
g_2 = \biota(f_2^*) = y(1)y(2) - t(1)t(2), \\
g_3 = \biota(f_3^*) = x(1)y(2)x(3)y(4)x(5)y(6) - t(1)t(2)t(3)t(4)t(5)t(6)\in\bP \\
\end{array}
\]
and we define the $L$-ideal $J' = \langle d_1,d_2,g_1,g_2,g_3 \rangle_\N$.
In fact, one has that $J = \Sat_L(J')$ and to perform this ideal operation
one needs a \Gr\ basis computation. Then, we fix the lexicographic monomial
ordering on $\bP$ with
\[
\ldots\succ x(2)\succ y(2)\succ t(2)\succ x(1)\succ y(1)\succ t(1)
\]
which is clearly a place $\N$-ordering inducing the graded right
lexicographic ordering on $F$ with $x\succ y$. Then, to compute $\Sat_L(J')$
one has to reduce the multilinear S-polynomials between generators and
performing the saturation of new generators arising by such reductions.
At the end of computation, whenever $I$ admits a finite \Gr\ basis, one
obtains a (saturated) \Gr\ $L$-basis $G\subset J$, that is, a \Gr\ basis
$\varphi(\biota^{-1}(G))$ of $I$, as prescribed by the algorithm
\FreeGBasis.

First of all, note that no multilinear S-polynomials are defined for
the elements $d_i$. Moreover, it is easy to see that all multilinear
S-polynomials between $d_i$ and any saturated element can be reduced
to zero. For instance, one has that the S-polynomial
\[
\spoly(d_1,1\cdot g_1) = - x(1)t(2)x(3) + t(1)t(2)t(3)
\]
is reduced modulo $1\cdot d_1$ to the element $x(1)x(2)t(3) - t(1)t(2)t(3) =
g_1 t(3)$ which clearly goes to zero modulo $g_1$.

Consider now the S-polynomial $\spoly(g_1,1\cdot g_1) = 
- t(1)t(2)x(3) + x(1)t(2)t(3)$ that can be clearly reduced to zero modulo
the set $\N\cdot d_1$. In a similar way, one obtains that
$\spoly(g_2,1\cdot g_2)$ reduces to zero. Then, we define the S-polynomial
\[
\spoly(g_3,5\cdot g_2) = - t(1)t(2)t(3)t(4)t(5)t(6)y(7) +
x(1)y(2)x(3)y(4)x(5)t(6)t(7)
\]
which is reduced modulo the set $\N\cdot d_2$ to the element
\[
g'_4 = x(1)y(2)x(3)y(4)x(5)t(6)t(7) - y(1)t(2)t(3)t(4)t(5)t(6)t(7).
\]
This polynomial cannot be further reduced by the current $\N$-basis
and hence one adds to this set the corresponding saturated element
\[
g_4 = \psi(g'_4)^* = x(1)y(2)x(3)y(4)x(5) - y(1)t(2)t(3)t(4)t(5).
\]
Then, we consider $\spoly(g_3,g_4) = y(1)t(2)t(3)t(4)t(5)y(6) -
t(1)t(2)t(3)t(4)t(5)t(6)$ that can be reduced to zero modulo
$\N\cdot\{d_2,g_2\}$. Consider now the next S-polynomial
\[
\spoly(g_1,1\cdot g_4) = - t(1)t(2)y(3)x(4)y(5)x(6) + x(1)y(2)t(3)t(4)t(5)t(6)
\]
By applying the set $\N\cdot\{d_1,d_2\}$ one obtains the element
\[
g'_5 = y(1)x(2)y(3)x(4)t(5)t(6) - x(1)y(2)t(3)t(4)t(5)t(6)
\]
and hence its saturation
\[
g_5 = \psi(g'_5)^* = y(1)x(2)y(3)x(4) - x(1)y(2)t(3)t(4)
\]
enters the current $\N$-basis. Then, one considers
\[
\spoly(g_2,1\cdot g_5) = - t(1)t(2)x(3)y(4)x(5) + y(1)x(2)y(3)t(4)t(5)
\]
which is reduced modulo $\N\cdot\{d_1,d_2\}$ to the element
\[
g'_6 = x(1)y(2)x(3)t(4)t(5) - y(1)x(2)y(3)t(4)t(5)
\]
and therefore the corresponding saturated element
\[
g_6 = \psi(g'_6)^* = x(1)y(2)x(3) - y(1)x(2)y(3)
\]
is appended to the $\N$-basis of the current $L$-ideal.
All remaining S-polynomials reduce to zero which means that such ideal
is $L$-saturated and therefore coincides with $J = \Sat_L(J')$. Note that
the sequence of leading monomials of the polynomials $g_i$ is
\[
\begin{array}{c}
\lm(g_1) = x(1)x(2), \lm(g_2) = y(1)y(2), \lm(g_3) = x(1)y(2)x(3)y(4)x(5)y(6), \\
\lm(g_4) = x(1)y(2)x(3)y(4)x(5), \lm(g_5) = y(1)x(2)y(3)x(4),
\lm(g_6) = x(1)y(2)x(3)
\end{array}
\]
and one has that $\lm(g_6)$ divides $\lm(g_3),\lm(g_4)$ and $1\cdot \lm(g_6)$
divides $\lm(g_5)$. We conclude that a minimal \Gr\ $L$-basis of the ideal $J$
is given by the set $G = \{d_1,d_2,g_1,g_2,g_6\}$. Because $J\subset\bP$
is exactly the extended letterplace analogue of the two-sided ideal
$I\subset F$, we obtain that the set $\{x^2 - 1, y^2 - 1, x y x - y x y\}$
is a minimal \Gr\ basis of $I$ with respect to graded right lexicographic
ordering. In other words, we have found the canonical presentation
\[
S_3 = \langle x,y \mid x^2 = y^2 = 1, x y x = y x y \rangle
\]
showing that this group is a quotient of the braid group
$B_3 = \langle x,y \mid x y x = y x y \rangle$.


\section{Implementations and testing}

In this section we present an experimental implementation of the
algorithm \FreeGBasis\ that has been developed in the language of Maple.
We have obtained such implementation by modifying the algorithm
\SigmaGBasis\ introduced and experimented in \cite{LS} for the
computation of \Gr\ bases for finite difference ideals. Precisely,
the letterplace computations are a special case of the ordinary
difference ones. The main modifications to obtain \FreeGBasis\
consist in adding the commutators $[t,x_i]$ to the elements introduced
by homogenizing the initial noncommutative generators and in encoding
all such elements in the letterplace way. Moreover, it is necessary
to add to the procedure the ``multilinearity criterion'', that is,
the condition $\lcm(\lm(f),\lm(i\cdot g))\in\bL$ when considering
the S-polynomial $\spoly(f,i\cdot g)$. Finally, one has to implement
the saturation of the elements that are obtained by reducing these
S-polynomials. Note that according to Theorem \ref{ntrick},
the multilinearity criterion, which is essential to have tractable computations,
can be obtained simply by adding the set of monomials $\mathcal{N} =
\{x_i(1)x_j(1), t(1)^2, t(1)x_i(1)\}$ to the initial letterplace basis.
This option is a useful trick if one wants to obtain the algorithm
\FreeGBasis\ by means of a standard implementation of the Buchberger
procedure for commutative \Gr\ bases.

To the purpose of studying the impact of different strategies used
in \FreeGBasis, we have tested also two variants of this algorithm
that are indicated in the examples with the suffix {\em noc}
(no-criterion) and {\em bas} (basic). Both these variants make use
of the saturation step $h := \psi(h)^*$ since it is well known that
mere homogenization of the initial generators is generally inefficient
and may lead to an infinite \Gr\ basis for the corresponding graded
but not saturated ideal even if the input ideal have a finite one
\cite{Uf3}. The variant {\em noc} is obtained simply by suppressing
the ``shift criterion'', that is, all S-polynomials
$\spoly(i\cdot f,j\cdot g)$ ($i,j\in\N)$ are considered for reduction.
In the variant {\em bas} we suppress also the shifting of the new
generators obtained from the reduction of the S-polynomials. In other
words, one applies shift operators just to the input letterplace generators.
This is correct since the different shifted versions of the generators
that are necessary to the reduction process will be created in any case
from the S-polynomials provided that the shift criterion is off.
Up to the saturation step, the basic version can be obtained therefore
by applying the Buchberger algorithm to the set of shifted elements
of the initial letterplace basis joined to the set of monomials
$\mathcal{N}$. We apply this trick on some examples where no saturation
arises, in order to have computing times with standard routines of
\textsf{Singular} that estimate approximately the speed-up that
one may obtain moving from the Maple interpreter to the kernel of
a computer algebra system. Note that an implementation of noncommutative
\Gr\ bases in the library \textsf{LETTERPLACE} of \textsf{Singular}
is currently under development \cite{LSS}.

The monomial $\N$-ordering which is considered for the polynomial algebra
$\bP$ is the lexicographic ordering with 
\[
\ldots\succ
x_1(2)\succ\ldots\succ x_n(2)\succ t(2)\succ
x_1(1)\succ\ldots\succ x_n(1)\succ t(1)
\]
that is clearly a place $\N$-ordering. Then, one has that the free
associative algebra $\bF = F \langle x_1,\ldots,x_n,t \rangle$ is endowed
with the graded {\em left} lexicographic ordering with
$x_1\succ\ldots\succ x_n\succ t$ by means of a reversing letterplace
embedding $\biota':y_1\cdots y_d\mapsto y_d(1)\cdots y_1(d)$, where
$y_k = x_{i_k}$ or $y_k = t$.

The parameters that are considered in the experiments are the number of
respectively input generators, output \Gr\ generators, elements of
a minimal \Gr\ basis, pairs (S-polynomials) that are actually reduced
and saturations steps. The last parameter is the computing time which
is given in the format ``minutes:seconds''. Attached to the number of
elements of a basis, after the letter ``d'' we indicate the maximum degree
of such elements. Note that for the variant {\em bas} the input and
output numbers count all the shifted versions of the basis elements.
Moreover, for all variants we have that the pairs number includes
the initial generators since we actually treat them as S-polynomials
in order to interreduce. All examples have been computed with Maple 12
running on a server with a four core Intel Xeon at 3.16GHz and 64 GB RAM.

\newpage
\begin{center}
\begin{tabular}[t]{|l|c|c|c|c|c|c|}
\hline
Example         & gens   & gb     & min gb & pairs & sats & time  \\
\hline
g3332d10        &   8d8  &  52d8  & 29d5   & 665   & 33   & 01:15 \\
g3332d10-noc    &   8d8  &  52d8  & 29d5   & 1904  & 33   & 02:29 \\
g3332d10-bas    &  60d8  & 232d8  & 29d5   & 1001  & 142  & 00:35 \\
\hline
g444d10         &   7d6  &  95d7  & 51d5   & 2657  & 40   & 11:28 \\
g444d10-noc     &   7d6  &  95d7  & 51d5   & 4201  & 40   & 17:43 \\
g444d10-bas     &  47d6  & 578d9  & 51d5   & 2396  & 342  & 06:29 \\
\hline
heckeAd15      &  10d3  &  27d11 & 27d11  & 237   & 0    & 00:49 \\
heckeAd15-noc  &  10d3  &  27d11 & 27d11  & 1657  & 0    & 02:53 \\
heckeAd15-bas  & 136d3  & 950d15 & 27d11  & 3902  & 0    & 51:53 \\
\hline
heckeDd15      &  10d3  &  16d7  & 16d7   & 89    & 0    & 00:13 \\
heckeDd15-noc  &  10d3  &  16d7  & 16d7   & 783   & 0    & 00:47 \\
heckeDd15-bas  & 137d3  & 250d11 & 16d7   & 1028  & 0    & 00:57 \\
\hline
heckeEd10      &  21d3  &  50d10 & 50d10  & 396   & 0    & 01:00 \\
heckeEd10-noc  &  21d3  &  50d10 & 50d10  & 1528  & 0    & 02:28 \\
heckeEd10-bas  & 184d3  & 630d10 & 50d10  & 2730  & 0    & 11:29 \\
\hline
lie5d25         &   3d2  &  26d25 & 26d25  & 26    & 0    & 02:02 \\
lie5d25-noc     &   3d2  &  26d25 & 26d25  & 279   & 0    & 03:08 \\
lie5d25-bas     &  72d2  & 348d25 & 26d25  & 348   & 0    & 02:53 \\
\hline
lie7d5          &  10d2  &  40d3  & 21d2   & 181   & 11   & 00:41 \\
lie7d5-noc      &  10d2  &  40d3  & 21d2   & 368   & 11   & 00:55 \\
lie7d5-bas      &  40d2  & 908d5  & 21d2   & 1982  & 106  & $>$2h \\
\hline
templieb8d8     &  34d3  &  64d8  & 64d8   & 581   & 0    & 01:02 \\
templieb8d8-noc &  34d3  &  64d8  & 64d8   & 1721  & 0    & 02:18 \\
templieb8d8-bas & 226d3  & 336d8  & 64d8   & 1879  & 0    & 01:31 \\
\hline
templieb9d9     &  43d3  &  85d9  & 85d9   & 920   & 0    & 03:47 \\
templieb9d9-noc &  43d3  &  85d9  & 85d9   & 3189  & 0    & 09:41 \\
templieb9d9-bas & 330d3  & 512d9  & 85d9   & 3418  & 0    & 05:43 \\
\hline
\end{tabular}
\end{center}

The performance of the different variants of the algorithm \FreeGBasis\
have been studied on a test set based on presentations of relevant
classes of noncommutative algebras. The examples {\em g3332} and {\em g444}
refer to the presentation of group algebras of presented groups.
Precisely, such groups belong to the classes $G(l,m,n,q) =
\langle r,s\mid r^l, s^m, (rs)^n, [r,s]^q \rangle$ and $G(m,n,p) =
\langle a,b,c\mid a^m, b^n, c^p, (ab)^2, (bc)^2, (ca)^2,(abc)^2 \rangle$.
The examples {\em hecke} are the presentation of the Hecke algebras
defined by the following Coxeter matrices
\[
A =
\left(
\begin{array}{
@{\hskip 2pt}c@{\hskip 2pt}c@{\hskip 2pt}c@{\hskip 2pt}c@{\hskip 2pt}
}
1 & 3 & 2 & 3 \\
3 & 1 & 3 & 2 \\
2 & 3 & 1 & 3 \\
3 & 2 & 3 & 1 \\
\end{array}
\right);
D =
\left(
\begin{array}{
@{\hskip 2pt}c@{\hskip 2pt}c@{\hskip 2pt}c@{\hskip 2pt}c@{\hskip 2pt}
}
1 & 3 & 2 & 2 \\
3 & 1 & 3 & 3 \\
2 & 3 & 1 & 2 \\
2 & 3 & 2 & 1 \\
\end{array}
\right);
E =
\left(
\begin{array}{
@{\hskip 2pt}c@{\hskip 2pt}c@{\hskip 2pt}c@{\hskip 2pt}c@{\hskip 2pt}
c@{\hskip 2pt}c@{\hskip 2pt}
}
1 & 2 & 3 & 2 & 2 & 2 \\
2 & 1 & 2 & 3 & 2 & 2 \\
3 & 2 & 1 & 3 & 2 & 2 \\
2 & 3 & 3 & 1 & 3 & 2 \\
2 & 2 & 2 & 3 & 1 & 3 \\
2 & 2 & 2 & 2 & 3 & 1 \\
\end{array}
\right).
\]
For the noncommutative polynomials defining the relations of the considered
Hecke algebras, the quantity ``$q$'' is assumed a parameter. The examples
indicated as {\em lie} refer to the universal eveloping algebra
of two indecomposable nilpotent Lie algebras, namely
\[
\begin{array}{l}
\mbox{lie5}: [x_1,x_2] - x_3, [ x_1,x_3 ] - x_4, [ x_2,x_5 ] - x_4; \\
\\
\mbox{lie7}: [x_1,x_2] - x_3, [x_1,x_3] - x_4, [x_1,x_4] - x_5,
[x_1,x_5] - x_6, \\
\hspace{23.3pt}
[x_2,x_3] - \frac{1}{2} x_4 - \frac{1}{4} x_5 + \frac{1}{8} x_6 +
\frac{1}{2} x_7, [x_2,x_4] - \frac{1}{2} x_5 - \frac{1}{4} x_6, \\
\hspace{23.3pt}
[x_2,x_5] - x_6, [x_2,x_7] - \frac{1}{2} x_5 + \frac{1}{4} x_6,
[x_3,x_4] + \frac{1}{2} x_6, [x_3,x_7] - \frac{1}{2} x_6.
\end{array}
\]
Finally, the examples {\em templieb8, templieb9} are the defining relations
of the Temper\-ley-Lieb algebras \cite{KL} respectively in 7 and 8 variables.
The quantity ``$\delta$'' used in the definition of such algebras is considered
a parameter. In all the names of the tests, we indicate after the letter ``d''
the bounded degree within the computation is performed, that is, the maximal
weight which is allowed for the variables of $P$.

The experiments show in a sufficiently clear way that the standard
version of the algorithm \FreeGBasis\ is generally the most efficient one.
In fact, this procedure is able to decrease relevantly the number of S-polynomial
reductions that are usually time-consuming. For instance, this emerges
in a dramatic way for the example {\em lie7}. Note that for the examples
{\em g3332,g444} the basic variant results very competitive. This can
be explained as the result of a low cost for the S-polynomial reductions
(binomial generators) compared to the cost of applying shifting to
letterplace polynomials. As previously remarked, a noncommutative
\Gr\ basis computed up to a fixed weight is certified complete
if such bound is $\geq 2d-1$, where $d$ is the maximal degree of the
output generators. This happens for instance for the examples
{\em g3332,g444,heckeD} and {\em lie7}. In particular, one obtains
a computational proof that the ideal obtained by homogenizing
the relations defining the Hecke algebra of the example {\em heckeD}
is a saturated one.

The computing times obtained with the implementation of \FreeGBasis\
in the language of Maple are useful to evaluate the possible
different variants of this algorithm but they are not especially relevant 
when compared to other implementations of noncommutative \Gr\ bases
developed in the kernel of highly efficient computer algebra systems.
Among these fast implementations, one has to mention the one of
\textsf{Magma} \cite{BCP} that makes use of a noncommutative version
of the Faugere's F4 method. To the purpose of estimating the speed-up
that may be achieved with a kernel implementation, we have computed
the timings of some examples with the basic variant of \FreeGBasis\
obtained by using the function ``std'' of \textsf{Singular}
that implements the Buchberger algorithm. For the examples {\em heckeAd15,
heckeEd10} and {\em teli9d9} such computing times are respectively
0.26, 0.34 and 1.01 sec. Keeping into account that the variant {\em bas}
shows to be the less efficient, we believe that these data,
together with all experiments performed in \cite{LSL1,LSL2,LSS}, indicate
that letterplace approach is feasible for both the homogeneous and
inhomogeneous case.

\section{Conclusion and future directions}

The theory and the methods proposed in this paper and in the previous
ones \cite{LSL1,LSL2} proves that commutative and noncommutative \Gr\ bases
and the related algorithms can be unified in a general theory for
\Gr\ bases of commutative ideals that are invariant under the action
of suitables algebra endomorphisms \cite{BD,GLS,LSL2,LS}.

We believe that this idea will have not only consequences in the development
of new algorithmic methods but also in the reformulation in the letterplace
language of structures and problems of noncommutative nature. It is
sufficient in fact to mention that the notion of \Gr\ basis is a key
ingredient for the description and computation of many fundamental
invariants. The experiments shows that the letterplace methods
are computationally practicable and hence new noncommutative tasks
can be achieved now by commutative computer algebra systems.
Future research directions may consist in investigating relationships
between commutative and noncommutative invariants based on \Gr\ bases
and in developing optimized libraries for their computation.


\section*{Acknowledgments}

We gratefully acknowledge the support of the team of \textsf{Singular}
when performing the tests on their servers. We also like to thank Francesco
Brenti and Vesselin Drensky for suggesting most of the considered examples.
We finally express our gratitude to the anonymous reviewers for their valuable
remarks and suggestions.

\end{document}